\documentclass[11pt]{amsart}
\usepackage{amsfonts,epsfig}
\usepackage{newlfont,amsfonts,amssymb,amsmath,amsthm,amsgen,amscd,datetime,enumerate}
\usepackage{multicol,picinpar}
\usepackage{amsaddr}

\setlength{\footskip}{1cm}
\setlength{\headsep}{1cm}
\setlength{\textwidth}{14.5cm}
\setlength{\textheight}{21.5cm}
\setlength{\topmargin}{.0cm}
\setlength{\oddsidemargin}{1cm}
\setlength{\evensidemargin}{1cm}

\setlength{\marginparsep}{2mm}
\setlength{\marginparwidth}{3.2cm}

\hyphenation{cor-res-pon-ding}
\hyphenation{geo-de-sic}


\usepackage{euscript,color}

\newcommand{\edremoved}[2][\empty]{\marg{}{#2}}
\newcommand{\edrevised}[2][\empty]{\marg{}{#2}}
\newcommand{\ednew}[2][\empty]{\marg{}{#2}}


\usepackage{hyperref}
\hyperbaseurl{.}



\newcommand{\marg}[1]{\marginpar{\small {\tt  #1 }}}

\newcommand{\hess}{\mathrm{Hess}}

\newcommand{\Z}{\ensuremath{\mathbb{Z}}}

\renewcommand{\O}{\ensuremath{\mathrm{O}}}
\newcommand{\R}{\ensuremath{\mathbb{R}}}
\renewcommand{\SS}{\ensuremath{\mathbb{S}}}

\newcommand{\bmat}{\begin{pmatrix}}
\newcommand{\emat}{\end{pmatrix}}

\newcommand{\e}{\mathrm{e}}

\newcommand{\bcase}{\begin{case}}
\newcommand{\ecase}{\end{case}}

\newcommand{\bclaim}{\begin{claim}}
\newcommand{\eclaim}{\end{claim}}

\newcommand{\bstep}{\begin{step}}
\newcommand{\estep}{\end{step}}

\newcommand{\bhlem}{\begin{hlem}}
\newcommand{\ehlem}{\end{hlem}}

\newcommand{\bleer}{\begin{leer}}
\newcommand{\eleer}{\end{leer}}
\newcommand{\bde}{\begin{de}}
\newcommand{\ede}{\end{de}}

\newcommand{\bs}{\begin{satz}}
\newcommand{\es}{\end{satz}}
\newcommand{\btheo}{\begin{theo}}
\newcommand{\etheo}{\end{theo}}
\newcommand{\bfolg}{\begin{folg}}
\newcommand{\efolg}{\end{folg}}
\newcommand{\blem}{\begin{lem}}
\newcommand{\elem}{\end{lem}}
\newcommand{\bnote}{\begin{note}}
\newcommand{\enote}{\end{note}}
\newcommand{\bprf}{\begin{proof}}
\newcommand{\eprf}{\end{proof}}
\newcommand{\bd}{\begin{displaymath}}
\newcommand{\ed}{\end{displaymath}}
\newcommand{\be}{\begin{eqnarray*}}
\newcommand{\ee}{\end{eqnarray*}}
\newcommand{\eeqa}{\end{eqnarray}}
\newcommand{\beqa}{\begin{eqnarray}}
\newcommand{\bi}{\begin{itemize}}
\newcommand{\ei}{\end{itemize}}
\newcommand{\bnum}{\begin{enumerate}}
\newcommand{\enum}{\end{enumerate}}

\newcommand{\ve}{\varepsilon}

\newcommand{\lra}{\longrightarrow}

\newcommand{\sub}{\subset}

\newcommand{\beq}{\begin{equation}}
\newcommand{\eeq}{\end{equation}}

\newcommand{\rr}{\mathbb{R}}

\newcommand{\M}{{\cal M}}

\newcommand{\vf}{\varphi}
\newcommand{\earr}{\end{array}\]}
\newcommand{\barr}{\[\begin{array}}
\newcommand{\bvec}{\left(\begin{array}{c}}
\newcommand{\evec}{\end{array}\right)}
\newcommand{\sumk}{\sum_{k=1}^n}
\newcommand{\sumi}{\sum_{i=1}^n}

\newcommand{\sumij}{\sum_{i,j=1}^n}
\newcommand{\sumkl}{\sum_{k,l=1}^n}

\newcommand{\g}{\mathfrak{g}}

\newcommand{\Hol}{\mathrm{Hol}}

\newcommand{\rrn}{\mathbb{R}^n}

\renewcommand{\b}{\flat}

\newcommand{\w}{\omega}

\newcommand{\s}{\sigma}
\newcommand{\del}{\partial}

\newcommand{\bbem}{\begin{bem}}
\newcommand{\ebem}{\end{bem}}
\newcommand{\bbez}{\begin{bez}}
\newcommand{\ebez}{\end{bez}}
\newcommand{\bbsp}{\begin{bsp}}
\newcommand{\ebsp}{\end{bsp}}


\newcommand{\Ker}{\mathrm{Ker }}

\newcommand{\grad}{\mathrm{grad}}
\newcommand{\wt}{\widetilde}
\newcommand{\tnab}{\wt{\nabla}}

\newcommand{\tem}{\widetilde{\cal M}}
\newcommand{\ten}{\widetilde{\cal N}}
\newcommand{\tg}{\widetilde{g}}

\renewcommand{\=}{\equiv}

\newcommand{\T}{\mathbb{T}}

\newcommand{\belabel}[1]{\begin{equation}\label{#1}}

\theoremstyle{definition}
\newtheorem{de}{Definition}
\newtheorem{bem}{Remark}
\newtheorem{bez}{Notation}
\newtheorem{bsp}{Example}

\newtheorem*{bsp*}{Example}
\newtheorem*{def*}{Definition}
\theoremstyle{plain}
\newtheorem{lem}{Lemma}
\newtheorem*{lem*}{Lemma}
\newtheorem{satz}{Proposition}
\newtheorem{folg}{Corollary}
\newtheorem{theo}{Theorem}

\setcounter{tocdepth}{1}

\begin{document}
\bibliographystyle{abbrv}

\title
{Completeness of compact  Lorentzian manifolds with Abelian holonomy}
\author[Thomas Leistner]{Thomas Leistner}
\address[Leistner, corresponding author]{\small School of Mathematical Sciences, University of Adelaide, SA 5005, Australia.\\ E-mail: thomas.leistner@adelaide.edu.au, telephone: +61 (0)8 83136401,\linebreak
fax: +61 (0)8 83133696 (Corresponding author)
} 

\author[Daniel Schliebner]{Daniel Schliebner} \address{\small Humboldt-Universit\"{a}t zu Berlin,
Institut f\"{u}r Mathematik, Rudower Chaussee~25,\linebreak 12489~Berlin, Germany.
E-mail: schliebn@mathematik.hu-berlin.de}

\thanks{This work was supported by the Group of Eight Australia and the German Academic Exchange
Service through the Go8 - DAAD Joint Research Co-operation
 Scheme.  
 The
first author acknowledges support from the Australian Research
Council via the grants FT110100429 and DP120104582. The second author is funded by the Berlin Mathematical School.}

\begin{abstract} We address the problem of finding  conditions under which a compact Lorentzian manifold is geodesically complete, a property, which always holds for compact Riemannian manifolds.
It is known that a compact Lorentzian manifold is geodesically complete  if it  is 
 homogeneous, or has constant curvature, or  admits a time-like conformal vector field.
We consider certain Lorentzian manifolds with Abelian holonomy, which are locally modelled by the so called pp-waves, and which, in general,  do not satisfy any of the above conditions. 
We show that compact pp-waves are universally covered by  a vector space, determine the metric on the universal cover, and prove that they are geodesically complete. Using this, we show that every Ricci-flat compact pp-wave is  a plane wave.
\end{abstract}

\subjclass[2010]{Primary 53C50; Secondary 53C29, 53C12,  53C22}
\keywords{Lorentzian manifolds, geodesic completeness, special holonomy, pp-waves}

\maketitle



\section{Introduction and statement of results}

An important class of Lorentzian manifolds are those with {\em special holonomy}. Following the terminology in Riemannian geometry, these are Lorentzian manifolds for which 
the connected holonomy group acts indecomposably, i.e., the manifold does not
 locally decompose into a product but  the holonomy   still is reduced from the full orthogonal group. In contrast to the Riemannian situation, the latter prevents the holonomy group from acting irreducibly and equips the manifold with a  bundle of tangent null lines which is invariant under parallel transport. We will study the geodesic completeness for a certain type of Lorentzian manifolds with special holonomy, namely those with {\em Abelian holonomy}.
 A semi-Riemannian manifold is {\em geodesically complete}, or for short {\em complete}, if all maximal geodesics are defined on $\rr$. 
 
After Berger's classification of connected irreducibly acting {\em Riemannian} holonomy groups \cite{berger55}, the quest for   {\em complete} or {\em compact}  Riemannian manifolds with holonomy groups from Berger's list produced some of the highlights of modern differential geometry, for example, Yau's proof of Calabi's conjecture or Joyce's construction of compact manifolds with exceptional holonomy (see \cite{joyce00} for a full account of  such results).
For Lorentzian manifolds
the classification of the connected components of indecomposable {\em Lorentzian}  holonomy groups was obtained in \cite{leistnerjdg} based on results in \cite{bb-ike93}. Moreover,  in \cite{galaev05} a construction method for  Lorentzian metrics was developed, which showed that indeed all possible groups can be realised as holonomy groups.
 A survey about the general classification is given in 
 \cite{galaev-leistner-esi}. 
 Furthermore, 
 a first attempt to investigate the {\em full holonomy group} and  global properties of the manifolds, such as global hyperbolicity, was made in \cite{baum-laerz-leistner12}. Compact Lorentzian manifolds with special holonomy  have also been studied in \cite{derdzinski-roter08,derdzinski-roter10}, in \cite{laerz-diss},  and in \cite{schliebner12,Schliebner15}. 
 
 Since there are no proper connected irreducible subgroups of the Lorentz group \cite{olmos-discala01}, and since the construction of Lorentzian manifolds with special  holonomy in some parts relies on the existence  results in Riemannian geometry,   finding Lorentzian manifolds with prescribed holonomy is  easier than in the Riemannian context. However, the relation between  compact and  complete examples is more subtle, since --- in sharp contrast to the Riemannian world --- compact Lorentzian manifolds do not have to be complete. 
  The standard example of this phenomenon is the Clifton-Pohl torus, which is  compact, but geodesically incomplete \cite[Example 7.16]{oneill83}. Hence, finding compact Lorentzian manifolds with special holonomy does not automatically provide geodesically complete examples.
  
 The question whether a compact Lorentzian manifold is complete is  classical in global Lorentzian geometry. 
Under some strong assumptions,  a compact Lorentzian manifold is complete, for example,  if it is flat \cite{carriere89}\footnote{In fact,  in \cite{carriere89} Carri\`ere proved a much more general result for affine manifolds. A direct proof for the flat case was given in   \cite{yurtsever92}. However, this proof has gaps as it was pointed out in \cite{romero-sanchez93}.},   has  constant curvature \cite{klingler96}, or if it is  homogeneous. In fact,  Marsden proved in \cite{marsden72} that any compact homogeneous semi-Riemannian manifold is complete. Moreover,  compact, {\em locally} homogeneous $3$-dimensional Lorentzian manifolds  are complete \cite{zeghib-dumitrscu10}. Finally, in \cite{romero-sanchez95} it was shown that compact Lorentzian manifolds with a {\em time-like conformal Killing vector field} are  complete (see also  \cite{kamishima93} or  \cite{romero-sanchez94semi, romero-sanchez94}). 
We are going to consider
Lorentzian manifolds with Abelian holonomy, the  so-called {\em pp-waves}. In general they do not satisfy any of these conditions, they are not locally homogeneous, not of constant curvature and do not admit a time-like conformal Killing vector field.
\bde\label{ppdef}
A Lorentzian manifold $(\cal M,g)$ 
is 
 called {\em pp-wave}\footnote{In the following we will consider compact manifolds of this type. We are aware that for compact manifolds the term {\em wave} might not be appropriate, but we 
use this term since it is established in the literature for manifolds with the given curvature 
properties. Later we will see that an appropriate name would be {\em screen flat}, but this term has 
other obvious problems.}  if it admits a
 global parallel null vector field $V \in \Gamma(T\cal M)$, i.e., $V\not=0$, $g(V,V)=0$ and $\nabla V=0$, and if  its curvature tensor $R$   satisfies
\begin{equation}\label{screen-flat}
	R(U,W)=0,\ \text{for all} \ \ U,W \in V^\bot.
\end{equation}
\ede
Here $\nabla$ denotes the Levi-Civita connection of $g$ and $R$  the curvature tensor of $\nabla$, $R\in \Lambda^2T^*\M\otimes \mathrm{End}(T\M)$ of $(\cal M,g)$. 
A  pp-wave metric locally depends only on one function: for a {pp-wave} $(\cal M, g)$  there are local coordinates $\left(\cal U,(u,v,x^1, \ldots , x^n)\right)$ such that 
\belabel{ppcoord}
g|_{\cal U}=2du (dv + Hdu )+  \delta_{ij}dx^idx^j,
\end{equation}
where $\dim \M=n+2$ and $H = H(u,x^1, \ldots ,x^n)$ is a smooth function on the coordinate patch $\cal U$  not depending on $v$.
If $\cal M=\rr^{n+2}$ and $g$ is {\em globally} of the form \eqref{ppcoord} we call $(\M,g)$ a 
 {\em  pp-wave  in standard form} or a {\em standard pp-wave}. Four-dimensional standard pp-waves were  discovered  by Brinkmann in the context of conformal geometry \cite{brinkmann25}, and then played an important role in general relativity  (e.g., see \cite{ehlers-kundt62}, where also  the name {\em pp-wave} for {\em plane fronted with parallel rays} was introduced).  More recently, as manifolds with a maximal number of parallel spinors, higher dimensional pp-waves appeared in supergravity theories, e.g. in \cite{hull84pp}, and there is now a vast physics literature on them.
  
 The existence of a parallel null vector field and the curvature condition imply that pp-waves have their holonomy contained in the Abelian ideal $\rr^n$ of the stabiliser $\O(n)\ltimes \rrn$ of a null vector. Moreover, under a mild genericity condition on the function $H$, their holonomy is {\em equal} to $\rrn$ and hence acts indecomposably.
Our  results about {\em compact} pp-waves can be summarised in   two theorems:

\begingroup
\renewcommand\thetheo{\Alph{theo}}

	\begin{theo}
		\label{MainTheo2}
The  universal cover of an $(n+2)$-dimensional compact pp-wave is globally isometric to a standard  pp-wave	\[
			\left(  \R^{n+2}, \ \ g^H = 
			2dudv + 2H(u,x^1, \ldots, x^n)du^2 + \delta_{ij}dx^idx^j\right).
\]
	Under this isometry, the lift of the parallel null vector field  is mapped to $\frac{\del}{\del v}$.
	\end{theo}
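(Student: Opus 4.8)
The plan is to pass to the universal cover, manufacture the coordinate $u$ from the parallel null $1$-form, exhibit the flat screen bundle in order to build the coordinates $x^1,\dots,x^n$, use completeness of $V$ (which holds because $V$ is Killing and $\M$ is compact) to build $v$ as a flow parameter, and finally read off the metric. Since it is precisely the universal cover that is in question, I would from the start replace $(\M,g)$ by its universal cover $(\widetilde{\mathcal M},\tilde g)$ with the lifted parallel null field, still denoted $V$, and produce the asserted global isometry sending $V$ to $\tfrac{\partial}{\partial v}$. First, since $V$ is parallel, $V^\flat=\tilde g(V,\cdot)$ is parallel, hence closed, so by simple connectivity $V^\flat=du$ for a smooth $u\colon\widetilde{\mathcal M}\to\R$; as $du=V^\flat$ vanishes nowhere, $u$ is a submersion with $\grad u=V$, and its level sets $\M_c:=u^{-1}(c)$ are the integral leaves of the parallel distribution $V^\perp$, on which $\tilde g$ is degenerate with radical $\R V$. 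Because $\M$ is compact it carries a Riemannian metric; its pull-back $h$ and $du=V^\flat$ being deck-invariant, $\grad_h u$ is deck-invariant, hence descends to a complete vector field on $\M$, is transverse to every leaf, and has $|du|_h$ bounded below, so its flow crosses all level sets and identifies $\widetilde{\mathcal M}$ with $\R\times\M_0$, where $u$ is a coordinate on the $\R$-factor; in particular $\M_0$ is simply connected.

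Next I would use the curvature condition \eqref{screen-flat}. The symmetry $\tilde g(R(X,Y)Z,W)=\tilde g(R(Z,W)X,Y)$ together with \eqref{screen-flat} shows that $R(X,Y)$ maps $V^\perp$ into $(V^\perp)^\perp=\R V$, so the induced connection on the screen bundle $\mathcal S=V^\perp/\R V$ is flat. Over the simply connected $\widetilde{\mathcal M}$ this bundle is trivial, hence has a global parallel $\tilde g$-orthonormal frame; lifting it to orthonormal sections $\widetilde E_1,\dots,\widetilde E_n$ of $V^\perp$ with $\nabla_X\widetilde E_i\in\R V$ and correcting each lift by a suitable multiple of $V$ (using simple connectivity of the leaves), one arranges $\theta^i:=\tilde g(\widetilde E_i,\cdot)$ to be closed, so $\theta^i=dx^i$ for smooth functions $x^i\colon\widetilde{\mathcal M}\to\R$. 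These satisfy $dx^i(V)=\tilde g(\widetilde E_i,V)=0$, so the $x^i$ are $V$-invariant, and $(u,x^1,\dots,x^n)\colon\widetilde{\mathcal M}\to\R^{n+1}$ is a submersion whose $1$-dimensional fibres are exactly the orbits of $V$; restricted to a leaf $\M_0$, the $x^i$ are flat coordinates of the quotient Riemannian manifold $\Sigma:=\M_0/V$.

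The heart of the matter is to upgrade this to a diffeomorphism $\widetilde{\mathcal M}\cong\R^{n+2}$ compatible with the above: one must show that the orbits of $V$ are all lines $\cong\R$, that $(u,x)$ is surjective onto $\R^{n+1}$, and that it admits a global section, after which the flow parameter $v$ of $V$ along that section completes the coordinate system and $V=\tfrac{\partial}{\partial v}$. This is where the compactness of $\M$ must enter, since a general simply connected pp-wave (for instance one with a point deleted) is not $\R^{n+2}$. Closed orbits are ruled out by simple connectivity, because a closed orbit would make the time-$T$ flow $\phi_T$ an isometry with trivial $1$-jet, hence $\phi_T=\mathrm{id}$. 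Surjectivity comes down to the completeness of the flat, simply connected Riemannian manifold $\Sigma$, which then forces $\Sigma\cong\R^n$; I would obtain this completeness from the compactness of $\M$, using that $V^\flat$ descends so that the fundamental group (or its subgroup preserving the leaf) acts by isometries cocompactly on $\Sigma$, and a Riemannian manifold with cocompact isometry group is complete. I expect this last completeness statement to be the \emph{main obstacle}; the remaining arguments are routine bookkeeping with parallel frames and closed forms.

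Finally, granted global coordinates $(u,v,x^1,\dots,x^n)$ with $V=\tfrac{\partial}{\partial v}$, the metric is forced. One has $\tilde g(\partial_v,\partial_v)=\tilde g(V,V)=0$; since $\partial_{x^i}\in\ker du=V^\perp$ we get $\tilde g(\partial_v,\partial_{x^i})=0$, and because $\partial_{x^i}=\widetilde E_i+a_iV$ projects to the parallel orthonormal screen frame, $\tilde g(\partial_{x^i},\partial_{x^j})=\delta_{ij}$; moreover $\tilde g(\partial_v,\partial_u)=du(\partial_u)=1$. Hence $\tilde g=2\,du\,dv+2A_i\,du\,dx^i+2H\,du^2+\delta_{ij}\,dx^i\,dx^j$, and $\mathcal L_{\partial_v}\tilde g=0$ (because $V=\partial_v$ is Killing) shows that $A_i$ and $H$ do not depend on $v$. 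A final coordinate change $v\mapsto v-\varphi(u,x)$ removes the mixed terms $A_i$ — possible here because \eqref{screen-flat} makes the relevant $1$-form closed — leaving $\tilde g=2\,du\,dv+2H(u,x^1,\dots,x^n)\,du^2+\delta_{ij}\,dx^i\,dx^j=g^H$, with the lift of $V$ equal to $\tfrac{\partial}{\partial v}$, as claimed.
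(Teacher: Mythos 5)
Your overall architecture is close to the paper's: pass to the universal cover, take $u$ with $du=V^\flat$, use flatness of the screen bundle to obtain a parallel frame, correct it by multiples of $V$ so that the dual one-forms become closed, and reduce everything to the completeness of the resulting flat structure on the leaves of $V^\bot$. You have also correctly located the crux. But your proposed resolution of that crux is where the proof breaks. You want to deduce completeness of the flat, simply connected leaf (equivalently of $\Sigma=\M_0/V$) from cocompactness of an isometric action of the fundamental group. The deck group does act cocompactly by isometries of $\wt g$ and of any Riemannian metric pulled back from $\M$, but it does \emph{not} act by isometries of the flat Riemannian metric you need: that metric is built from the corrected frame $\hat S_i=S_i-b_iV$, and the functions $b_i$ are obtained by integrating the $\alpha^i$ leafwise on the universal cover; they do not descend to $\M$ and may be unbounded. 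This is not a removable technicality: Example~\ref{bundleexample} exhibits a compact pp-wave admitting \emph{no} horizontal and involutive screen distribution downstairs, so in general there is no deck-invariant flat metric on the leaves to which a cocompactness argument could apply. (Moreover, the leaves of $V^\bot$ in $\M$ need not be compact --- they can be dense --- so even the stabiliser of a leaf need not act cocompactly on its cover.) The paper's proof is organised precisely around this difficulty: the uncorrected $S_i$ are shown to be geodesic, hence complete, vector fields for the deck-invariant complete metric $h$ (Theorem~\ref{MainTheo1}), and Lemma~\ref{silemma} then shows by a flow-splitting computation that completeness survives the non-equivariant correction $S_i\mapsto S_i-b_iV$; completeness of the parallel frame $(\wt V,\hat S_1,\dots,\hat S_n)$ gives completeness of the flat connection on the leaves, and the explicit map $\Phi(u,v,x)=\exp_{\gamma(u)}\bigl(v\,\wt V(\gamma(u))+x^kS_k(\gamma(u))\bigr)$ produces the global isometry.

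A second, smaller gap: your claim that closed $V$-orbits are ``ruled out by simple connectivity'' is not justified as stated. Your $1$-jet argument does give $\phi_T=\mathrm{id}$, but a free circle action on a simply connected manifold is not in itself a contradiction (consider the Hopf fibration of $S^3$). What actually rules out closed orbits is that on each simply connected leaf the one-form dual to $V$ with respect to the involutive and horizontal frame is closed, hence exact, and strictly increases along $V$-orbits; this is how the paper splits off the $v$-direction (Proposition~\ref{PropUnivSplit} applied inside a leaf, as in Lemma~\ref{screenlemma}). Your final normalisation removing the cross terms $A_i\,du\,dx^i$ is essentially fine once the frame has been made involutive and horizontal, but it rests on the same corrected frame whose global existence and completeness are the real content of the theorem.
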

	
	The proof in Section \ref{thmAsection} uses the so-called {\em screen bundle} $\Sigma=V^\bot/V\to \M$	and the induced {\em screen distributions} (as described in Section~\ref{screensection}). They can be used to define 
	Riemannian metrics on the leaves of $V^\bot$, which are flat in case of pp-waves. This yields  a  detailed description of the universal cover of pp-waves in Section~\ref{coversection}. Then, results by  Candela et al.~\cite{candela-flores-sanchez03} about the completeness of certain
	{\em non-compact} Lorentzian manifolds, which apply to  pp-waves in standard form, enable us 
	in Section~\ref{thmBsection}
	 to prove

	\begin{theo}
		\label{MainTheo3}
		Every compact pp-wave $(\cal M, g)$ is geodesically complete.
	\end{theo}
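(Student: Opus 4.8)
The plan is to reduce the theorem, via Theorem~\ref{MainTheo2}, to a completeness question for \emph{non-compact} pp-waves in standard form, which is then handled by the results of Candela, Flores and S\'anchez~\cite{candela-flores-sanchez03}. The first step is to recall that geodesic completeness is invariant under semi-Riemannian coverings: geodesics of the base lift, along a covering, to geodesics of the total space with the same maximal interval of definition, and geodesics of the total space project to geodesics of the base. Hence $(\M,g)$ is geodesically complete if and only if the standard pp-wave $\bigl(\R^{n+2},g^H\bigr)$ produced by Theorem~\ref{MainTheo2} is. This is the only point at which compactness of $\M$ is used: a \emph{general} standard pp-wave need not be complete, so one must exploit that the profile $H$ occurring here comes from a compact quotient.

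Next one writes down the geodesic equations of $g^H$. With $\gamma(s)=\bigl(u(s),v(s),x(s)\bigr)$ and $x=(x^1,\dots,x^n)$, the Christoffel symbols of $g^H$ give $\ddot u=0$, so $u(s)=as+b$ is affine and defined on all of $\R$; the transverse components satisfy
\[
 \ddot{x}^{i}(s)\;=\;a^{2}\,\bigl(\del_{i}H\bigr)\bigl(as+b,\,x(s)\bigr),\qquad i=1,\dots,n,
\]
and, on any interval where $x(\cdot)$ is defined, $v(\cdot)$ is recovered by a single integration of the continuous function $-a^{2}\bigl(\del_{u}H\bigr)-2a\sum_{i}\bigl(\del_{i}H\bigr)\dot{x}^{i}$. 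Therefore $\bigl(\R^{n+2},g^H\bigr)$ is geodesically complete if and only if every maximal solution of this transverse system extends to all of $\R$; the case $a=0$ is trivial, so only $a\neq0$ needs work.

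By the completeness criteria of~\cite{candela-flores-sanchez03}, this extension property holds whenever $H$ grows at most quadratically in the spatial variables, locally uniformly in $u$ --- an estimate of the shape $|H(u,x)|\le R_{1}(u)|x|^{2}+R_{2}(u)|x|+R_{3}(u)$ with $R_{1},R_{2},R_{3}$ continuous on $\R$ suffices, since a finite parameter interval corresponds to a compact range of $u=as+b$. It thus remains to establish such a bound for the $H$ described in Section~\ref{coversection}. The deck group $\Gamma$ of the cover acts by isometries of $g^H$ fixing the parallel null field $\tfrac{\del}{\del v}$; a short computation shows that every such isometry has the triangular form $(u,v,x)\mapsto\bigl(u+a,\ v+\ell(u)\cdot x+c(u),\ Ax+f(u)\bigr)$ with $A\in\O(n)$ a constant orthogonal matrix, and that being an isometry forces $H\bigl(u+a,\,Ax+f(u)\bigr)-H(u,x)$ to be \emph{affine} in $x$ with coefficients depending only on $u$. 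Using a compact fundamental domain of the cocompact $\Gamma$-action together with this relation and iterating it, one obtains that $H(u,x)$ is a polynomial of degree at most $2$ in $x$ with coefficients continuous in $u$, plus a remainder that is bounded --- together with its spatial gradient --- locally uniformly in $u$. This yields the required estimate, and together with the two preceding paragraphs shows that $\bigl(\R^{n+2},g^H\bigr)$, hence $(\M,g)$, is geodesically complete.

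The covering reduction and the integration of the geodesic equations are routine. The heart of the matter is the growth bound: one must pin down the exact form of the $\tfrac{\del}{\del v}$-preserving isometries, extract the affine-in-$x$ relation for $H$, and then upgrade cocompactness of the $\Gamma$-action to a \emph{uniform} spatially quadratic bound on $H$. That last point is the real obstacle, since the transverse part of the $\Gamma$-action need not be by a lattice of translations --- the $\O(n)$-parts of the deck transformations and the $u$-translations may interact nontrivially (as they do for plane waves, which occur among compact pp-waves) --- and since one must control $H$ as $|u|\to\infty$, not merely over a fundamental domain.
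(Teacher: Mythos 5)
Your overall architecture (pass to the universal cover via Theorem~\ref{MainTheo2}, then invoke a completeness criterion of Candela--Flores--S\'anchez for standard pp-waves) is the same as the paper's, and the covering reduction and the integration of the geodesic system are fine. But the proof has a genuine gap exactly where you place the weight: the spatial growth estimate on $H$ is asserted, not proved. Your derivation of the triangular form of the $\partial_v$-preserving isometries and of the relation ``$H(u+a,Ax+f(u))-H(u,x)$ is affine in $x$'' is correct, but the passage from this relation plus cocompactness of the deck group to ``$H$ is a quadratic polynomial in $x$ plus a remainder bounded locally uniformly in $u$'' is precisely the hard analytic content, and you give no argument for it. The difficulties you yourself list are real and are not resolved: the $u$-components of the deck transformations need only generate a (possibly non-discrete, even dense) subgroup of $\R$, the transverse parts mix $\O(n)$-rotations with $u$-dependent translations $f(u)$ that you have no a priori control over, and iterating the affine relation must simultaneously control $H$ as $|x|\to\infty$ and as $|u|\to\infty$. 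As written, the proposal reduces the theorem to an unproved structural claim about $H$.

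The paper closes this gap by a completely different and much shorter device (Lemma~\ref{hess-lemma}): on the compact manifold $\M$ one forms the symmetric $(0,2)$-tensor $Q(X,Y)=R(X,Z,Z,Y)$ from a screen vector field $Z$; by compactness the pointwise norm $\overline{g}(Q,Q)$ is bounded, and pulling $Q$ back to the cover and using $R^{g^H}(\partial_i,\partial_u,\partial_u,\partial_j)=-\partial_i\partial_jH$ one finds $\overline{g}(Q,Q)=\sum_{i,j}(\partial_i\partial_jH)^2$, so the full spatial Hessian of $H$ is \emph{globally} bounded. This is strictly stronger than the quadratic growth bound you aim for, it makes no reference to the deck group, and it allows the paper to get by with an elementary Lipschitz/ODE argument (Lemma~\ref{completelemma}, via Theorem~\ref{Lem-Sanchez}(i)) instead of the full quadratic-growth criterion of \cite{candela-romero-sanchez13} that your sketch would need. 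If you want to salvage your route, the cleanest fix is to replace the deck-group analysis by this curvature-norm argument; otherwise you must actually carry out the iteration over the $\Gamma$-orbits, including the case of a dense group of $u$-translations.
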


\endgroup
As a consequence we obtain that the examples of compact pp-waves we give below, are also  geodesically complete examples of  Lorentzian manifolds of special holonomy.


Theorem \ref{MainTheo3} is somewhat surprising when recalling that Ehlers and Kundt  posed the following problem
 \cite[Section 2-5.7]{ehlers-kundt62}\footnote{We thank Wolfgang Globke for pointing  us to this reference.}
:
 \begin{quote}
{\em ``Prove the plane waves to be the only $g$-complete pp-waves, no matter which topology one chooses.''}
\end{quote}
The {\em plane waves} mentioned in the problem are a  special class of pp-waves:
\bde\label{planedef}
A   pp-wave $(\M,g)$ with parallel null vector field $V$ is a {\em plane wave} if 
 \belabel{planewave}
  \nabla R = V^\flat \otimes Q,\end{equation}
   where $R$ is the curvature tensor of $(\M,\g)$, $Q$ is a $(0,4)$-tensor field,  and  $V^\flat:=g(V,.)$. 
   \ede
   For a plane wave, the function $H$ in the local form~\eqref{ppcoord}
 of the metric is of the form
   \belabel{spw}
   H(u,x^1, \ldots x^n)=a_{ij}(u)x^ix^j, \text{ with }a_{ij}=a_{ji}\in C^\infty(\rr).
   \end{equation}
       This can be used to show that plane waves (in standard form, i.e., with $\M=\rr^{n+2}$ and $g=g^H$ with $H$ as in \eqref{spw})  are always geodesically complete (\cite[Proposition 3.5]{candela-flores-sanchez03}, we review this result in Section~\ref{seccomplete}). 
    Our Theorem~\ref{MainTheo3} shows that {\em any compact} pp-wave is complete, even if it is not a plane wave.
    
\bbsp
    \label{torusexample}
Let $\eta$ be the flat metric on the $n$-torus $\mathbb{T}^n$ and $H\in C^\infty(\mathbb{T}^n)$ a smooth function on $\mathbb{T}^n$. On $\cal M:=\mathbb{T}^2\times \mathbb{T}^n$ we consider the Lorentzian metric
\belabel{example}g^H=2d\theta d\vf + 2H d\theta^2 +\eta,\end{equation}
where $d\theta $ and $d\vf$ is the standard coframe on $\mathbb{T}^2$. This metric is a complete pp-wave metric on the torus $\mathbb{T}^{n+2}$, and one can choose $H$ in a way that it is not a plane wave. Indeed, computing $\nabla R$ shows that for any function $H$ with non-vanishing third partial derivatives with respect to the $x^i$-coordinates, the equality \eqref{planewave} is violated.
More examples are given in \cite{laerz-diss} and \cite{baum-laerz-leistner12}, and in our Example~\ref{bundleexample}.
\ebsp
However,  this example is not in contradiction to the claim in the Ehlers-Kundt problem because there
pp-waves  are understood to be solutions of the Einstein {\em vacuum} field equations and hence, in addition to Definition ~\ref{ppdef}, are assumed to be Ricci flat. 
 But the metric \eqref{example} is Ricci flat if and only if $H$ is harmonic with respect to the flat metric on the torus, which forces $H$ to be constant and   $g^H$ to be flat.
In fact, Theorem~\ref{MainTheo2} and results in Section \ref{thmBsection} allow us to generalise this observation.
 \bfolg\label{ricfolg}
Every compact Ricci-flat pp-wave is a plane wave.
\efolg
 This solves the Ehlers-Kundt problem in case of compact manifolds. 
 We should mention that, using their results in \cite{candela-flores-sanchez03},  another partial answer to the Ehlers-Kundt problem is given in     \cite[Theorem 4]{flores-sanchez06}. We describe this in our Section~\ref{seccomplete} and provide more examples of (non-compact) complete pp-waves that are not plane waves with our
Lemma~\ref{completelemma} and the corresponding Remark~\ref{EKremark} in Section~\ref{compactsection}.
 
Motivated by Corollary \ref{ricfolg}, in Section \ref{thmBsection} we apply Theorems \ref{MainTheo2} and \ref{MainTheo3}  to compact plane waves and conclude not only that they are complete but also  covered by a  plane wave in standard form. This
and results about compact plane waves with parallel Weyl tensor 
in \cite{derdzinski-roter08,derdzinski-roter10},
 suggest the problem of studying  compact quotients of plane waves, however, such investigations are beyond the scope of this paper.

\ednew{Theorem \ref{MainTheo3} has an interesting consequence for another special case, namely for compact indecomposable Lorentzian  locally symmetric spaces. A semi-Riemannian manifold is {\em decomposable} if it is locally a product, i.e., each point admits a neighbourhood on which the metric is a product metric. Otherwise the manifold is {\em indecomposable}.
  It is known that an indecomposable,  simply connected Lorentzian symmetric space is either  a Cahen-Wallach space or has a semisimple transvection group \cite[Theorems~2 and~3]{cahen-wallach70}, in which case it has constant sectional curvature \cite{berger57,olmos-discala01}.  Then,  for compact {\em locally} symmetric spaces, i.e., with $\nabla R=0$,  our results and those of \cite{klingler96} imply:
\bfolg\label{symcol}
An indecomposable, compact locally symmetric Lorentzian manifold is geodesically complete. As a consequence, it is  a  quotient by a lattice of either the universal cover of the odd-dimensional anti-de Sitter space or of  a Cahen-Wallach space.
\efolg
The proof of the geodesic completeness is given in  Section \ref{symsec}.
To obtain the further consequence in this corollary, recall that neither de Sitter spaces \cite{CalabiMarkus62} nor 
even-dimensional universal anti-de Sitter spaces have  compact quotients \cite{Kulkarni81}, see also \cite{wolf67}.
For lattices in the isometry groups of Cahen-Wallach spaces see \cite{MedinaRevoy85,Fischer13,KathOlbrich15}.}

Of course, it would be interesting to study geodesic completeness for the larger class of Lorentzian manifolds with special holonomy, i.e., with parallel null vector field, or more generally, with parallel null line bundle. We believe that some of our methods can be generalised to this setting, although such a generalisation is not straightforward
 as our Example~\ref{incompex} on page~\pageref{incompex} shows.
For instance, an interesting question is, whether  the manifolds constructed  in    
 \cite{galaev05} to realise all possible (connected) holonomy groups are complete. These problems will be subject to  further research.
 
\subsection*{Acknowledgements} We would like to thank Helga Baum and Miguel S\'{a}nchez  for helpful discussions and comments on the first draft of the paper. We also thank the the referees for valuable comments \ednew{and Ines Kath for alerting us to the implications our result has for locally symmetric spaces.}

\section{Screen distributions and pp-waves}
\label{screensection}

\subsection{The screen bundle  and  screen distributions}

Most of what follows in  this section immediately generalises to 
 general (time-orientable) Lorentzian manifolds with special holonomy,
i.e, with parallel null line bundle in the tangent bundle, \edrevised{see also \cite[Proposition 2]{baum-laerz-leistner12} for equivalent conditions}, 
but for our purposes we will restrict ourselves  to Lorentzian manifolds with {\em global parallel} null vector field
\edrevised{$V$, i.e., with
$g(V,V)=0$ and $\nabla V=0$, where $\nabla$ the Levi-Civita connection of $g$.
 In accordance with \cite{BlancoSanchezSenovilla13} we call such a manifold a {\em Brinkmann space}, after \cite{brinkmann25}, and we will from now on denote the parallel null vector field by $V$.}
Let 
 $(\M,g)$ be a Brinkmann space of 
 of dimension $(n+2)$.
 Since the vector field $V$ is parallel, also   the distribution of hyperplanes 
 \[V^\bot=\{X\in \Gamma(T\M) \mid g(X,V)=0  \}\] 
 is parallel in the  sense that it is invariant under parallel transport. Hence, we have a filtration of $T\M$ into parallel distributions
 \[
\rr\cdot V\subset V^\bot \subset T\M.\] 
 In particular, both distributions are involutive and define foliations $\cal V$ and $\cal V^\bot$ of the manifold $\cal M$. 
Locally, a Brinkmann space
 $(\cal M,g)$ 
 coordinates $(\cal U,\vf=(u,v,x^1, \ldots , x^n))$ such that the metric is given as
\belabel{coord}
g|_{\cal U}=2du (dv + Hdu +\mu_i dx^i)+  \hat{g}_{ij}dx^idx^j
\end{equation}
where $H$, 
$\hat{g}_{ij}=\hat{g}_{ij}(u,x^1, \ldots , x^n)$ and $\mu_i =\mu_i (u,x^1,\ldots , x^n)$ are smooth 
functions on $\cal U$, not depending on $v$.
In these coordinates, the parallel null vector field  $V|_{\cal U}$ is given by $\del_v$ and the leaves of $V^\bot|_{\cal U}$ are given by $u \= $ constant. 
%
%

 The above filtration   defines 
a vector bundle, the {\em screen bundle}
\[
\Sigma:=V^\bot/V \lra \cal M,\]
which is equipped with a positive definite metric induced by $g$,
\[g^\Sigma([X],[Y]):=g(X,Y)
,\] and a covariant derivative $\nabla^\Sigma$ induced by the Levi-Civita connection $\nabla $ of $g$,
\[\nabla^\Sigma_X[Y]=[\nabla _XY].
\] 

\bde
Let $(\cal M,g)$ be a \edrevised{Brinkmann space} of  dimension $n+2>2$.
A {\em screen distribution $\SS$} is a subbundle of $V^\bot$ of rank $n$ on which the metric $g$ is 
non-degenerate. 
A null vector field $Z$ such that $g(V,Z)\equiv 1$ is called a {\em screen vector field}.
\ede
Every screen vector field defines a screen distribution 
via $\SS:=V^\bot\cap Z^\bot$, where by $\bot$ we  denote the orthogonal space with respect to the metric $g$. 
In fact,  there is  a one-to-one correspondence between
screen distributions $\SS$,
 screen vector fields $Z$,
 and null one-forms $\zeta$ such that $\zeta(V)=1$:
\[
Z\longmapsto \SS=V^\bot\cap Z^\bot=\Ker(V^\flat, \zeta)\longmapsto\zeta=Z^\flat
\]where  $Z^\flat=g(Z,.)$ and $\Ker (.)$ is the annihilator of the one-forms in the argument. 

By partition of unity, $\cal M$ always admits a screen distribution which is a non-canonical splitting of the exact sequence
$0 \to \rr\cdot  V\to V^\bot \to \Sigma \to 0$. 
\edrevised{
Since 
 $\SS^{\bot}$ has rank $2$, 
the vector field 
 $V$   gives a unique global
 section $Z$ to the null-cones of $\SS^{\bot}$ with $g(V,Z)=1$, and thus} defining a  screen vector field. \edremoved{{\cite[Proposition 2]{baum-laerz-leistner12}}}

In the following we will make use of the Riemannian metric $h=h^\SS$ defined by a screen distribution 
$\SS$ or, equivalently, by a screen vector field $Z$ via
\begin{equation}\label{h}
h(V,.):=g(Z,.),\  \ h(Z,.):=g(V,.),\  h(X,.):=g(X,.) \text{ for }X\in\SS.
\end{equation}
and extension by linearity. 


\subsection{Horizontal and involutive screen distributions}

In the following we will call
a screen distribution {\em horizontal} if for every $p\in M$ exists a neighborhood $\cal U$ and local frame fields 
$S_1, \ldots, S_n \in \Gamma (\SS|_{\cal U})$ of $\SS$ such that 
\begin{equation}\label{horizontal}
[V,S_i]\in \Gamma (\SS|_{\cal U}),
\end{equation} 
and {\em involutive} if 
\begin{equation}\label{involutive}
[S_j,S_i]\in \Gamma (\SS|_{\cal U}).
\end{equation} 
Note that both conditions are independent of the chosen frame fields for $\SS$.
Note also that  the coordinates in \eqref{coord} provide us with a {\em local} horizontal screen distribution spanned by $\del_1+\mu_1\del_v,\ldots, \del_n+\mu_n\del_v$ 
 with
corresponding  screen vector field 
\[Z=\del_u- H\del_v -2g^{ij}\mu_i\del_j,\]
where $g^{ij}$ is the inverse matrix of $g(\del_i,\del_j)$.
Moreover, if   the $\mu_i$'s in \eqref{coord} are the coefficients of a $u$-dependent family of closed one-forms, this screen is also involutive.

The term {\em horizontal} for a given screen distribution comes from the following observation 
about the Riemannian metric defined in \eqref{h}. Its proof is a straightforward computation using the Koszul formula.

\bs \label{prop1}
Let $(\cal M,g)$ be a \edrevised{Brinkmann space}. By  $\cal N$ we denote  a 
leaf of the integrable distribution $V^\bot$. Furthermore,  let $\SS $ be a screen distribution defining a Riemannian metric $h$ on $\M$ and an induced Riemannian metric $h^\cal N$ on each leaf $\cal N$ by restriction.   Then we have
\bnum
\item 
$\SS $ is horizontal 
if and only if on each leaf $\cal N $ of $V^\bot$,   $V$ defines an isometric Riemannian flow, i.e., $V$ is a Killing vector field of constant length for the metric $h^{\cal N}$ on $\cal N$.
\item Fix a leaf $\cal N$ and assume that, along $\cal N$, there is a screen 
distribution $\SS$ which is involutive  and horizontal. Then $V\in \Gamma (T\cal N)$ is  parallel on 
$(\cal N, h)$, i.e., with respect to the Levi-Civita connection $\nabla^h$ of $h$. 
In particular, the leaves of $\SS$ in $\cal N$ are totally geodesic for $h$.
\enum
\es
Now we derive some criteria 
 in terms of  $Z^\flat$
for a screen distribution to be horizontal and involutive. An immediate consequence of $\SS=V^\bot\cap Z^\bot$ is:


\blem
\label{LemCharInvolAndHor}
Let $\SS$ be a screen distribution with screen vector field $Z$. Then
	$\SS$ is involutive and horizontal if and only if 
	$dZ^\b|_{V^\bot\wedge V^\bot}=0$, which is equivalent to $V^\b\wedge dZ^\b=0$.
\elem


Now we assume that the  the screen bundle $\Sigma$ is globally trivialisable, i.e., that  $\Sigma$ admits $n$ linearly 
independent global sections. Then, for each screen distribution $\SS$, the bundle projection
\[\SS\ni X\longmapsto [X]\in \Sigma\]
gives a global \edrevised{orthonormal} frame field $S_1, \ldots, S_n$ for $\SS$ from a trivialisation of $\Sigma$. 
\edrevised{
We denote by $S^i=S_i^\flat=\delta^{ij}g(S_j,.)$ the corresponding metric duals,
where $\delta^{ij}$ denotes the Kronecker-symbol and we use the Einstein summation convention.}
Then we define 
global one-forms
\begin{equation}
	\label{alpha} \alpha^i := \nabla S^i(Z)=\delta^{ij}g(\nabla  S_j,Z),
\end{equation}
and 
	a direct computation shows
that $\SS $ is horizontal if and only if \begin{equation}
	\label{alpha-horiz}\alpha^i(V)=0,
\end{equation}
and $\SS$ is involutive if and only if 
\begin{equation}
	\label{alpha-integ} \alpha^i(S_j)-\alpha^j(S_i)=0.
\end{equation}
Now we compute the difference between two screen vector fields and their screen distributions,
still under the assumption that the screen bundle $\Sigma$ is globally trivial, so that we have a global orthonormal 
frame field $\s_1, \ldots , \s_n$ of $\Sigma$. Then, if $\SS$ and $\hat\SS$ are two screen distributions, 
the sections $\s_i$ define sections $S_i\in \Gamma(\SS)$ and $\hat S_i\in \Gamma(\hat\SS)$, both 
orthonormal with respect to $g$,  which are related by
\[\hat{S}_i=S_i-b_i V \longmapsto \s_i=[S_i]\in \Gamma(\Sigma),\]
for smooth functions $b^i$ on $\cal M$. 
\edrevised{
For the dual relation we have
\[
\hat S^i=S^i-b^i V^\flat,
\]
with functions $b_i=b^i$.}
The corresponding screen vector fields 
$Z$ and $\hat Z$ are then related by
\edrevised{
\[\hat Z=Z+ b^kS_k-\frac{1}{2}\, b^kb_k\, V,\]}
and for the differentials of the duals we get
\edrevised{\[
d\hat{Z}^\flat=dZ^\flat
+d b_k\wedge S^k +
b_k dS^k
- b_k db^k\wedge V^\flat.\]}
Then, computing the differentials of $Z^\b$ and $d\hat S^\b_i$ we get
\[
dZ^\b= \delta_{kl}S^k \wedge \alpha^l
\]
with $\alpha^k$ defined in \eqref{alpha}, 
and
\edrevised{\[dS^i=\w^i_{\;k}\wedge S^k+\alpha^i\wedge V^\b,\]}
where $\w^i_{\;k} $ is the part of the  connection one-form defined by 
\edrevised{\belabel{omegas} \omega^i_{\;k}=- \nabla  S^i(S_k).\end{equation}}
This allows us to express  the differential of $\hat Z^\b$ in terms of a basis of the old screen, its connection coefficients and the functions $b^i$ as
\edrevised{
\[
d\hat Z^\b= (db_k +b_l\w^l_{\; k} -\delta_{kl}\alpha^l)\wedge S^k + b_k(\alpha^k-db^k)\wedge V^\b,
\]}
This, together with Lemma~\ref{LemCharInvolAndHor} gives us

\bs \label{propinvandhor}
Let $(\M, g)$ be a \edrevised{Brinkmann space} and a screen bundle that is defined by global sections $S_1, \ldots, S_n$, and let $\alpha^i$ and $\omega^i_{~j}$ be the corresponding connection forms defined in \eqref{alpha} and \eqref{omegas}. Then there is
 an involutive and horizontal screen 
distribution if and only if there are smooth functions $b_1, \ldots , b_n$ on $\M$ which are solutions to the differential system 
\edrevised{
\belabel{diffs1system}
(db_k +b_l\w^l_{\; k} -\delta_{kl}\alpha^l)\wedge S^k|_{V^\bot\wedge V^\bot}.
\end{equation}
In particular, if there exist functions $b_i$ such that  
\belabel{diffs1} (db_k +b_l\w^l_{\; k} -\delta_{kl}\alpha^l)|_{V^\bot}=0,\end{equation}}
then there is a horizontal and involutive screen distribution spanned by $S_i - b_iV$.
\es

\subsection{Manifolds with trivial screen holonomy and pp-waves}

We say that a \edrevised{Brinkmann space} $(\cal M,g)$ 
has {\em trivial screen holonomy} if the full holonomy group of $\nabla^\Sigma$ is trivial, 
i.e., consists only of the identity transformation. This is related to the  notion of 
pp-waves as in Definition \ref{ppdef}.
\bs\label{pp-prop}
 Let  $(\cal M,g)$ be a \edrevised{Brinkmann space} with parallel null vector field $V$.
 \bnum
 \item The following statements are equivalent:
 \bnum
 \item 
  $(\cal M,g)$  is 
 a {\em pp-wave}.
 \item \label{screen-flat2}
For all $W \in V^\bot$ and $ X,Y\in \Gamma(T \cal M)$ it holds
$
R(X,Y)W \in \rr V
$.
 \item The screen bundle $(\Sigma, \nabla^\Sigma)$ is flat, 
i.e.,  the curvature of $\nabla^\Sigma$ vanishes.
\item The connected component of the holonomy group of $(\cal M,g)$ is contained in $\rr^n\sub \mathrm{SO}(1,n+1)$.
\item There exist local sections $S_1, \ldots S_n$ of $V^\bot$ with $g(S_i,S_j)=\delta_{ij}$ and  local one-forms $\alpha^i$ such that 
\edrevised{
$\nabla S_i= \delta_{ij}\alpha^i\otimes V$ as in \eqref{alpha}}.
In this case, the one-forms satisfy $d\alpha^i |_{V^\bot\wedge  V^\bot}=0$.
\enum
 \item The holonomy of $\nabla^\Sigma $ is trivial if and only if the holonomy of $(\cal M,g)$ is contained in $\rrn$. 
 \enum
\es
\bprf
\edrevised{
The proof of (1a) $\Leftrightarrow$ (1b) follows directly from the definition of a pp-wave. Moreover, (1b) $\Leftrightarrow$ (1c)
is straightforward by the definition of $\nabla^\Sigma$. The computations proving the equivalence of (1a) and (1d) are carried out in \cite[Theorem 4.2]{leistner01}, while (2) is obvious by 
\cite[Proposition 2(4)]{baum-laerz-leistner12}.
Finally, the existence of sections $S_i$ as in property (1e) is equivalent to (1c) since, locally, the existence of a $g^\Sigma$-orthonormal basis of $\nabla^\Sigma$-parallel sections $\sigma_i \in \Gamma(\Sigma)$ which
constitute a frame $S_i$ as in (1e) is equivalent to the flatness of $(\Sigma, \nabla^\Sigma)$. The property for the differentials of the $\alpha^i$'s, however, follows from the following computation: Let $Z$ be a screen vector field, $X\in V^\bot$ and $S_i$  frame fields as in (1e). Then
\[
d\alpha^i(S_j,X)
=
g(	R(S_j,X)S_i ,Z)
=
g(	R(S_i ,Z)S_j,X)
=
d\alpha^j(S_i,Z)g(V,X)
=
0,\]
for all $i,j=1, \ldots , n$, i.e., $d\alpha^i |_{V^\bot\wedge  V^\bot}=0$.
}
\eprf


Clearly, manifolds with trivial screen holonomy are pp-waves, but for non simply connected manifolds the converse is not 
true  (see \cite{baum-laerz-leistner12} for examples).
%

Locally, for a pp-wave the coordinates in \eqref{coord} can be chosen in a way such that 
$\mu_i\equiv 0$ and $\hat{g}_{ij}\equiv \delta_{ij}$, i.e., with $\hat g$ being the 
standard flat metric for all $u$, i.e.,
\[
g=2du (dv + Hdu )+  \delta_{ij}dx^idx^j
\]
where $H = H(u,x^1, \ldots ,x^n)$ is a smooth function.
\edrevised{This can be seen by computing the curvature for a
metric of the form  \eqref{coord}. 
Evaluating the curvature conditions defining a pp-wave will then imply that the metric $\hat{g}_{ij}$ is flat and hence can be chosen as $\hat{g}_{ij}=\delta_{ij}$, and that the $1$-form $\mu_i$ is closed, which allows to change coordinates in a way that $\mu_i=0$ (for an explicit proof see \cite{schimming74} or the appendix of \cite{GlobkeLeistner14}).}
 In these coordinates, $\nabla \del_v=0$ and
\[\nabla_{\del_i} \del_j=0,\ \nabla_{\del_i} \del_u=\del_i(H)\del_v,\ \nabla_{\del_u} \del_u=\del_u(H)\del_v -\sumi \del_i(H)\del_i\]
which implies that the only non-vanishing curvature terms of $g$, up to symmetries, are
\begin{equation}\label{curv}
R (\del_i,\del_u,\del_u,\del_j)=-\del_i\del_j H,
\end{equation}
where our sign convention is $R (X,Y)U=\left[ \nabla _X,\nabla _Y\right]U-\nabla _{[X,Y]}U$, 
and for the Ricci curvature, $Ric=\mathrm{trace}_{(2,3)}R $,
\[Ric(\del_u,\del_u)=\Delta (H)
\]
where $\Delta=-\sumi \del_i^2$ is the flat Laplacian. Formula \eqref{curv} shows that the connected  holonomy of a pp-waves is {\em equal} to $\rrn$, and hence indecomposable, if there is a point in $\M$ with local coordinates such that the Hessian of $H$ is non-degenerate at this point.

Since the distribution $V^\bot$ is parallel and thus defines a foliation of $\cal M$ into totally 
geodesic leaves of codimension one, the flatness of the screen bundle can be stated as


\blem
Let $(\cal M, g)$ be a Lorentzian manifold with parallel null vector field $V$ and foliation $V^\bot$. Then $(\cal M, g)$ is a pp-wave if and only if, for each leaf
$\cal N$ of $V^\perp$,
 the linear 
connection which is induced on $\cal N$ by the Levi-Civita connection of $g$ is flat.
\elem


	\bprf
	Let $\nabla^{\cal N}$ be the linear connection defined by $\nabla $ on a leaf $\cal N$ of 
	$V^\bot$, i.e., $\nabla^{\cal N}_UW:=\nabla _UW\in V^\bot|_{\cal N}$ for 
	$U,W\in \Gamma(T{\cal N})$, where $T{\cal N} = V^\bot|_{\cal N}$. 
	Hence, for the curvature $R$ of $\nabla$ and $R^{\cal N}$ of $\nabla^{\cal N}$ we have
	\[R^{\cal N}(U,W)S=R (U,W)S \ \ \ \forall \ U,W,S\in V^\bot|_{\cal N}.\]
	This term vanishes if and only if $g(R (U,W)S,X)=0$ for all $X\in \Gamma(T\cal M)$, which is 
	equivalent to \eqref{screen-flat} in the definition of pp-waves on page \pageref{screen-flat}.
	\eprf 


\bs
\label{PropTrivHol}
Let $(\cal M,g)$ be a Lorentzian manifold with parallel null vector field $V$ and trivial screen 
holonomy, or equivalently, with \edrevised{$\Hol(\cal M,g) \subseteq \rrn$}. Then, for each screen distribution 
$\SS=V^\bot \cap Z^\bot$ with screen vector field $Z$, there is a global \edrevised{orthonormal} 
frame field $S_1, \ldots , S_n$ of $\SS$ \edrevised{with 
	\begin{equation}\label{nablasi}
\nabla S_i = \delta_{ij}\alpha^j \otimes V,
\end{equation} where the $\alpha^i$ agree with \eqref{alpha}}
and \edremoved{the $\alpha^i$  defined in \eqref{alpha}} satisfy
\[d\alpha^i(X,Y)=R(X,Y,S_i,Z),\]
and hence
\[d\alpha^i|_{V^\bot\wedge V^\bot} =0.\]
Furthermore, a given screen distribution $\SS$ can be changed to an involutive and horizontal 
one, if there exist functions $b^1, \ldots , b^n$ on $\cal M$ such that
\[(db^i-\alpha^i)|_{V^\bot}=0.\]
\es


	\bprf
	Since $\Sigma$ is assumed to have trivial holonomy, we find global basis sections 
	$\s_i$ of $\Sigma$ such that $\nabla^\Sigma  \s_i=0$. Hence, for a given screen 
	distribution, the induced frame fields $S_i$ satisfy 
	$[\nabla_XS_i]=\nabla^\Sigma_X\sigma_i=0$ and thus
\[\nabla  S_i=g(\nabla  S_i, Z)V =\nabla S^i(Z)V=\alpha^i\otimes V,\]
	or equivalently, $\w^i_{\; j}=0$. As above, we have
	\be
	R (X,Y)S_i 
	& = & d\alpha^i(X,Y) \cdot V,	
	\ee
	since $V$ is parallel.
	Given functions $b^i$ with $(db^i-\alpha^i)|_{V^\bot}=0$, from $\w^i_{\; j}=0$ and 
	equation~\eqref{diffs1} in Proposition~\ref{propinvandhor} we see that $\hat S_k=S_k - b_kV$ with $b^k=b_k$ defines a horizontal and 
	integrable screen distribution.
	\eprf
For Lorentzian manifolds with trivial screen holonomy admitting a horizontal and integrable 
screen distribution, we can strengthen Proposition~\ref{prop1} in the following way:


\bs \label{prop2}
Let $(\cal M,g)$ be a Lorentzian manifold with parallel null vector field $V$ and with 
trivial screen holonomy. Let $\cal N$ be a leaf of the integrable distribution $V^\bot$. 
Assume that, along $\cal N$\!, there is a screen distribution $\SS$ which is involutive and 
horizontal. Then  the Riemannian metric  $h$ on $\cal N$ that is defined by $\SS$ by 
the relations \eqref{h} is flat and the frame $S_i$ in  Proposition~\ref{PropTrivHol} together with $V$ constitutes a $\nabla^h$-parallel frame field for $(\cal N,h)$.
\es


	\bprf
	Let $\SS$ be an involutive and horizontal screen distribution along a $V^\bot $-leaf 
	$\cal N$. By Proposition~\ref{PropTrivHol}, we have sections $S_1, \ldots , S_n$ of $\SS$ 
	with \edrevised{\eqref{nablasi}}, i.e., \[g(\nabla _XS_i,Y)=0\]
	for all $X,Y\in \Gamma(T\cal N)$. Writing out the Koszul formula for this term
	we get
	\be
	0&=&X(g(S_i,Y))+S_i(g(X,Y))-Y(g(S_i,X))
	\\
	&&{ }+g([X,S_i],Y)+g([Y,S_i],X)+g([Y,X],S_i)
	\ee
	This equation holds for all $X,Y\in \Gamma(T\cal N)$, but, since $\SS$ was assumed to be 
	horizontal and involutive, we have that the brackets $[X,S_i]$, $[Y,S_i]$ and $[X,Y]$ 
	are in $\SS$. Hence, when recalling the definition of $h$ in \eqref{h}, in the above 
	expression we can replace the metric $g$ by the Riemannian metric $h$ on $\cal N$, 
	which shows that
	\[h(\nabla^h_XS_i,Y)=0.\]
	Hence, the $S_i$ are parallel vector fields on $(\cal N, h)$. But we have already 
	seen in Proposition~\ref{prop1} that $V$ is also parallel for $h$. Hence, 
	we have a $h$-orthonormal frame of $\cal N$ which is parallel for $\nabla^h$ 
	yielding the flatness of $(\cal N,h)$.
	\eprf
\edremoved{
\bbem
Let $(\cal M,g)$ be a pp-wave and $\SS$ an involutive and horizontal screen distribution 
on $\cal M$ and denote by $h$ the Riemannian metric on $\cal M$ defined by $\SS$.
 In this situation, we have seen that,
on each  leaf $\cal N$ of $V^\bot$,  the connection induced by $\nabla $  coincides 
with the Levi-Civita connection of   the  Riemannian metric $h^{\cal N}$  which is 
{\em induced by $h$ on $\cal N$}, and in fact, both are flat. However, on $\cal M$ 
 the Levi-Civita connections  of $(\M,g)$ and $(\M,h)$ do not 
coincide,  not even along $\cal N$.
\ebem}

\section{The universal cover of a pp-wave}
\label{coversection}

\subsection{Review of completeness results for Lorentzian manifolds with parallel null vector field}\label{seccomplete}
Before we turn to the universal cover of pp-waves  and to the proof of the main theorems for compact manifolds, we want to recall results about the completeness of 
 Lorentzian manifolds with parallel null vector fields
in a more general setting.
To our knowledge, the strongest of such results   can be found in 
 \cite{candela-flores-sanchez03}.
\setcounter{theo}{0}
\btheo[{\cite[Theorem 3.2 and Corollary 3.4]{candela-flores-sanchez03}}]
\label{Lem-Sanchez}
	Let $(\cal S, h)$ be a connected Riemannian manifold of dimension $n$ and let $H\in C^\infty(\rr\times \cal S)$ be a smooth function. On the manifold
$\cal M:=\rr^2\times \cal S$ define the Lorentzian metric $g$ by
\belabel{pfw}
g|_{(u,v,x)}=2dudv+2H(u,x)du^2+h|_x,
\end{equation}
where $x\in \cal S$ and $(u,v)$ are the global coordinates on $\rr^2$.
\bnum[(i)]
\item
The Lorentzian manifold $(\cal M,g)$ is geodesically complete if and only if the Riemannian manifold $(\cal S, h)$ is complete and the solutions 
 $s \mapsto \gamma(s)$ of the ODE
	$$
		\frac{\nabla^h \dot \gamma}{ds}(s) = \grad^hH(s,\gamma(s))
	$$	
	are defined on the whole real line. Here $\nabla^h$ is the Levi-Civita connection of $h$.
\item If $(\cal S,h)$ is geodesically complete and  the function $H$
does not depend on $u$ and is at most quadratic at spacial infinity,
i.e., there exist $x_0\in \cal S$ and real constants $r,c >0$ such that
\[ H(x) \leq c \cdot d_{\cal S}(x_0,x)^2, \quad \mbox{ for all } x\in \cal S \mbox{ with } d_{\cal S}(x_0,x) \geq r,\]
then $(\cal M,g)$ is geodesically complete. Here $d_{\cal S}$ is the
distance function of $(\cal S, h)$.
\enum
\etheo
This theorem applies to pp-waves in standard form, and  to the more general class of pp-waves that are globally of the form \eqref{pfw} with $(\cal S,h)$ a flat Riemannian manifold, not necessarily the $\rr^n$.
 For plane waves (as defined in the introduction) it implies
 \bs[{\cite[Proposition 3.5]{candela-flores-sanchez03}}]
 Let $(\cal M,g)$ be a Lorentzian metric of the form \eqref{pfw} and assume that its universal cover is globally isometric to  a standard plane-wave
 \[\big(\rr^{n+2},
\wt g=2du (dv + (a_{ij}(u)x^ix^j) du )+  \delta_{ij}dx^idx^j
\big),\] with $a_{ij}=a_{ji}\in C^\infty(\rr)$.
 Then $(\cal M,g)$ is geodesically complete.
 \es
Regarding the Ehlers-Kundt problem, these results imply:
\bs{\cite[Theorem 4]{flores-sanchez06}}
 Any gravitational (Ricci-flat and four-dimensional) pp-wave (in standard form) such that $H$ behaves at most quadratically at spatial infinity (in the sense of Theorem~\ref{Lem-Sanchez}) is a (necessarily complete) plane wave.
\es

%

\subsection{Horizontal and involutive screen distributions on the universal cover}

In this section we will deal with compact pp-waves but first present some results which 
hold in a more general setting. A vector field is {\em complete} if its maximal integral curves are defined on $\rr$.


\bs
\label{PropUnivSplit}
Let $\cal M$ be a  manifold with a closed nowhere-vanishing one-form $\eta$. Assume that there is a complete 
vector field $Z$ with  $\eta(Z)=1$. Then the leaves of the distribution $\Ker(\eta)$ 
are all diffeomorphic to each other under the flow $\phi_t$ of $Z$, and the universal 
cover $\tem$ of $\cal M$ is diffeomorphic to $\rr\times \widetilde{\cal N}$
with the diffeomorphism  given as
\begin{equation}\label{flow}
\Phi :\rr\times \ten \ni(u,p) \mapsto \wt\phi_u(p)\in \tem,
\end{equation}
where $\ten$ is the universal cover of a leaf of the distribution $\Ker(\eta)$ and $\wt \phi$ is the flow of the lift of $Z$.
If $\cal M$ is compact with closed $\eta$, all of the above 
is satisfied.
\es


	\bprf 
	The idea of the proof can be found in \cite[Thm. 3.1]{milnor63}.
	Since $\eta $ is closed, the distribution $\Ker (\eta)$ is involutive and
	 the Lie derivative of $g$ is
	\[\cal L_Z\eta (X)=d\eta(Z,X)+X(\eta(Z))\equiv 0.\] 
	For each $t\in \rr$, the flow $\phi_t$ of $Z$ 
	is a diffeomorphism of $\cal M$. Then $\cal L_Z\eta=0$
	 shows that $\phi_t$ maps  the leaves of the distribution $\Ker(\eta)$ 
	diffeomorphically onto each other.
	
	Let $\wt\eta $ and $\wt Z$ be the lifts of $\eta$ and $Z$ to the universal cover 
	$\tem$. Then $\wt{Z}$ is still a complete vector field with $\wt\eta(\wt Z)=1$ and 
	$d\wt\eta=0$. Hence, there is a real function $f\in C^\infty(\cal M)$ such that 
	$\wt\eta =df$. Let $\wt\phi_t$, $t\in \rr$,  denote the flow of $\wt{Z}$. Then, 
	for each $p\in \tem$, the function $\tau_p:\rr\to \rr$ defined by 
	$\tau_p(t):= f(\wt \phi_t(p))\in \rr$ satisfies
	\[
	\tau'_p(t)=df_{\phi_t(p)}(\wt Z)=\eta_{\wt\phi_t(p)}(\wt Z) \equiv 1.
	\]
	Hence, $\tau(t)=t+f(p)$. This shows that $f:\tem \to \rr$ is surjective and that 
	two level sets $\ten_a=f^{-1}(a)$, $a\in \rr$, are diffeomorphic under the flow,
	\[
	\wt \phi_{b-a}:\ten_a\simeq \ten_b .\]
	This defines a diffeomorphism
	\be
	\Phi\ :\  \rr\times \ten_0&\lra &\tem
	\\
	(u,p)&\longmapsto&\wt\phi_u(p)
	\ee
	the inverse of which is given by
	\[\Phi^{-1}(p)=(f(p),\wt\phi_{-f(p)}(p))\in \rr\times \ten_0.\]
Being simply connected,  $\ten:=\ten_0$   is  the universal cover 
	of the leaves of $\Ker(\eta)$.
	\eprf

The proposition applies in particular to a compact Lorentzian manifold with parallel 
null vector field $V$ defining a closed one form $\eta=g(V,.)$. Here $\ten$ is  
an integral manifold of the distribution $\wt V^\bot$. 
\begin{theo}	
	\label{ThmHI1}
	Let $(\cal M, g)$ be an $(n+2)$-dimensional pp-wave with parallel null vector field 
	$V \in\Gamma (T\cal M)$ and complete screen vector field. 
	Then there exists a horizontal 	and involutive screen distribution $\SS$ on the 
	universal cover $(\tem, \widetilde{g})$ of $(\cal M, g)$. \edrevised{Moreover}, there are 
	linear independent \edrevised{$\widetilde{g}$-orthonormal} vector fields  $S_i\in \Gamma(\SS)$ on $\tem$, $i=1, \ldots , n$, 
	with $\tnab_XS_i=0$,
	for all $X\in \widetilde{V}^\bot$ and $\tnab$ the Levi-Civita connection of $\tg$. \edrevised{In particular},  $(V,S_1, \ldots , S_n)$ is a $\nabla^h$-parallel orthonormal frame on $(\wt{\cal N},h)$, where  $h$ is the Riemannian metric defined by $\SS$ on the   leaves of $\wt{V}^\perp$.	
\end{theo}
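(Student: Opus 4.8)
The plan is to lift everything to the universal cover and there solve the differential system from Proposition~\ref{propinvandhor}, exploiting that on $\tem$ all closed forms are exact. First I would pass to $(\tem,\tg)$ with its lifted parallel null vector field $\wt V$ and the lifted closed one-form $\eta = \tg(\wt V,\cdot)$; since the screen vector field on $\cal M$ is complete, so is its lift $\wt Z$, and $\eta(\wt Z)\equiv 1$. By Proposition~\ref{pp-prop} the screen bundle $(\Sigma,\nabla^\Sigma)$ is flat, and since $\tem$ is simply connected the (full, hence trivial) screen holonomy gives a global $\tg^\Sigma$-orthonormal frame $\sigma_1,\dots,\sigma_n$ of $\wt\Sigma$ with $\nabla^\Sigma\sigma_i=0$. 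Starting from the lift of any screen distribution on $\cal M$ (equivalently $\wt Z$), this produces a global $\tg$-orthonormal frame $S_1,\dots,S_n$ of the corresponding screen $\SS$ on $\tem$ with, by Proposition~\ref{PropTrivHol}, $\omega^i_{\;k}=0$ and $\tnab S_i = \delta_{ij}\alpha^j\otimes\wt V$, where $d\alpha^i|_{\wt V^\bot\wedge \wt V^\bot}=0$.

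Next I would produce the functions $b^i$ that convert $\SS$ into a horizontal and involutive screen. By Proposition~\ref{PropTrivHol} it suffices to find $b^1,\dots,b^n\in C^\infty(\tem)$ with $(db^i-\alpha^i)|_{\wt V^\bot}=0$. This is where the structure of the universal cover enters: by Proposition~\ref{PropUnivSplit} applied to $\eta$ and $\wt Z$, we have a diffeomorphism $\Phi:\rr\times\ten\to\tem$, $(u,p)\mapsto\wt\phi_u(p)$, under which $\wt V^\bot$ pulls back to the distribution tangent to the $\ten$-slices, and $f=u\circ\Phi^{-1}$ satisfies $df=\eta$. Pulling $\alpha^i$ back to $\rr\times\ten$, the condition $d\alpha^i|_{\wt V^\bot\wedge\wt V^\bot}=0$ says exactly that the restriction $\alpha^i_u := \Phi^*\alpha^i|_{\{u\}\times\ten}$ is a closed one-form on the simply connected manifold $\ten$ for each fixed $u$. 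Hence $\alpha^i_u = d_{\ten}\psi^i_u$ for a function $\psi^i_u$ on $\ten$; choosing the normalisation $\psi^i_u(p_0)=0$ at a basepoint (or, better, fixing it by $\psi^i_u(p)=\int_{p_0}^{p}\alpha^i_u$ along any path, which is well-defined and depends smoothly on $u$) gives a smooth function $\psi^i$ on $\rr\times\ten$. Setting $b^i := \psi^i\circ\Phi^{-1}$ on $\tem$, one checks that $d b^i - \alpha^i$ annihilates every vector tangent to an $\ten$-slice, i.e.\ $(db^i-\alpha^i)|_{\wt V^\bot}=0$, as required. Then $\hat S_i = S_i - b_i\wt V$ (with $b_i=b^i$) spans a horizontal and involutive screen distribution by Proposition~\ref{PropTrivHol}.

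For the final assertions I would argue as follows. With the new frame, $\tnab\hat S_i = (db^i-\alpha^i)\otimes\wt V$ (using $\omega^i_{\;k}=0$ and $\tnab\wt V=0$), so $\tnab_X\hat S_i=0$ for all $X\in\wt V^\bot$; after relabelling $\hat S_i$ as $S_i$ these are the desired $\tg$-orthonormal vector fields with $\tnab_X S_i=0$ on $\wt V^\bot$. Restricting to a leaf $\wt{\cal N}$ of $\wt V^\bot$, Proposition~\ref{prop1}(1) gives that $\wt V$ is $\nabla^h$-parallel on $(\wt{\cal N},h)$ because $\SS$ is horizontal, and Proposition~\ref{prop2} (its hypotheses—trivial screen holonomy plus an involutive horizontal screen along $\wt{\cal N}$—are now met) gives that each $S_i$ is $\nabla^h$-parallel on $(\wt{\cal N},h)$ as well, so $(\wt V,S_1,\dots,S_n)$ is the claimed $\nabla^h$-parallel orthonormal frame, and $(\wt{\cal N},h)$ is flat.

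The main obstacle is the middle step: one must verify that the fibrewise primitives $\psi^i_u$ can be chosen to depend smoothly on the parameter $u$ (smoothness in $p$ being automatic), and that the resulting $b^i$ genuinely solve $(db^i-\alpha^i)|_{\wt V^\bot}=0$ rather than merely the weaker system $(db_k+b_l\omega^l_{\;k}-\delta_{kl}\alpha^l)\wedge S^k|_{\wt V^\bot\wedge\wt V^\bot}=0$ of \eqref{diffs1system}. Smoothness follows from differentiating the path-integral formula $\psi^i(u,p)=\int_{\text{path }p_0\to p}\alpha^i_u$ under the integral sign, using that $\Phi^*\alpha^i$ is a smooth form on $\rr\times\ten$; the definite-solution claim then follows because $\omega^i_{\;k}=0$ collapses \eqref{diffs1system} to exactly the equations $\alpha^i_u=d_{\ten}\psi^i_u$ on each slice. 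A secondary point to be careful about is that $\Phi^*\eta = du$ under the splitting, so "restriction to $\wt V^\bot$'' and "restriction to the $\ten$-slice tangent spaces'' agree, which is what makes the slice-wise construction equivalent to solving the system on $\tem$.
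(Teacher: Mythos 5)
Your proposal is correct and follows essentially the same route as the paper's proof: trivialise the flat screen bundle on the simply connected cover to get the frame $S_i$ with $\tnab S_i=\alpha^i\otimes\wt V$, use the splitting $\tem\cong\rr\times\ten$ from Proposition~\ref{PropUnivSplit} to integrate the fibrewise-closed forms $\alpha^i$ slice by slice into functions $b^i$ depending smoothly on the flow parameter, and pass to $\hat S_i=S_i-b^i\wt V$, with the final parallelity claim supplied by Proposition~\ref{prop2}. The only cosmetic slip is citing Proposition~\ref{prop1}(1) rather than (2) for the $\nabla^h$-parallelity of $\wt V$, which is harmless since Proposition~\ref{prop2} already yields the full parallel frame.
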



\bprf
	By a tilde we denote the lift of any object to the universal cover $\tem$ of $\cal M$.
 Let $\SS=V^\bot \cap Z^\bot$ be a screen distribution defined by a complete screen vector field $Z$.	By assumption, the bundle 
 $\widetilde{\Sigma} \longrightarrow \tem$ is flat, and, since $\tem$ is simply connected, has trivial holonomy.	 Thus we obtain linearly independent 
	global parallel sections $\sigma_1, \ldots, \sigma_n \in \Gamma(\widetilde{\Sigma})$ which 
	give rise to $n$ sections	$S_1, \ldots, S_n \in \Gamma(\widetilde{\SS})$ with 
	$\tnab  S_i = \alpha^i \otimes \widetilde{V}$ with $\alpha^i:=\tg(\tnab  S_i,\wt Z)$. 
	Since $\cal M$ is a pp-wave, according to Proposition~\ref{PropTrivHol}, they satisfy
	\begin{equation}
		\label{dalpha}
		d\alpha^i|_{\widetilde{V}^\bot \wedge \widetilde{V}^\bot} = 0.
	\end{equation}	
	By Proposition~\ref{PropUnivSplit}, the universal cover $\tem $ is diffeomorphic to 
	$\R \times \cal{N}$, where $\cal {N}$ is 
	the universal cover of the leaves of the distribution $V^\bot$, and the map $\R\times \cal N\to \tem$ 
	is given by the flow of $\wt Z$. Now, for each $r\in \rr$, let 
	\be
	\iota_{(r)}: \cal N &\hookrightarrow & \R \times \cal  N \\
	x&\mapsto &(r,x)
	\ee
	denote the inclusion of $\cal N$ into $\R\times \cal N$. We use these to pull back the 
	$\alpha^i$'s to $\cal N$, 
	\[\alpha_{(r)}^i:=(\iota_{(r)})^*\alpha^i,\]
	which is now a one-parameter family of one-forms on $\cal N$, depending smoothly on 
	the parameter $r\in \R$. Because of equation \eqref{dalpha}, all $\alpha^i_{(r)}$ 
	are closed,
	\[d \alpha_{(r)}^i=d( \iota_{(r)}^*\alpha^i)=\iota_{(r)}^*d\alpha^i =0 .\]
	Fixing  $x_0\in \cal N$, since $\cal N$ is simply connected, for each $i=1, \ldots , n$ 
	and each $r\in \R$ we find a unique  function $b^i_{(r)}\in C^\infty(\cal N)$ such that
	\[ db^i_{(r)}=\alpha_{(r)}^i,\ \text{ and }\ b^i_{(r)}(x_0)=0,\]
	where the differential is the differential on $\cal N$. Hence we obtain smooth functions 
	$b^i\in C^\infty (\R\times \cal N)$ defined by
	\[b^i(r,x)=b_{(r)}^i(x).\]
	We have to verify that these functions are indeed smooth on $\R\times\cal  N$: Take an arbitrary
	$\hat x\in \cal N$ and fix coordinates \edrevised{$(\cal U,\vf=(x^1, \ldots , x^{n+1}))$ }
	around $\hat x$ 
	such that \edrevised{$\vf(\cal U)$ is star-shaped} and $\vf(\hat x)=0$. Over $\cal U$ we write $\alpha_{(r)}^i$ as 
	\[\alpha_{(r)}^i|_{(r,\vf^{-1}(x^1,\ldots , x^{n+1}))} = \sum_{k=1}^{n+1} \alpha^i_k(r,x^1, \ldots , x^{n+1})dx^k\]
	with $\alpha_k^i$ smooth functions on $\R \times \vf(\cal U)$ and the solutions $b_{(r)}^i$ 
	are given by
	\be
	b^i(r,\vf^{-1}(x^1, \ldots ,x^{n+1}) ) & = & b_{(r)}^i\circ \vf^{-1}(x^1, \ldots ,x^{n+1}) 
	\\
	& = & \sum_{k=1}^{n+1} x^k\!\!\!\int_0^1 \alpha_k^i(r,  t x^1, \ldots , tx^{n+1})dt 
	+ b^i(r,\hat x), \ee
	for all $(x^1,\ldots , x^{n+1})\in \vf( \cal U)$. Since $\alpha_{(r)}^i$ and hence $\alpha_k^i$ 
	depend smoothly on $r$, this is  smooth in $r$ and $x^i$, \edrevised{ as soon as $b^i(,\hat x)$ is smooth in $r$. Now choosing $\hat x=x_0$ first, because of $b^i(.,x_0)\equiv 0$, we see that $b^i$ is smooth on $\rr\times \cal U$, where $\cal U$ is a star-shaped neighbourhood of $\hat x=x_0$. Then covering $\cal N$ by star-shaped neighbourhoods, this argument shows that $b^i$ is smooth on $\rr\times \cal N$.}
	Using these $b^i\in C^\infty(\R\times \cal N)$ we define the new screen distribution  
\[		\widehat{\SS} := \operatorname{span}\{\hat S_1, \ldots, \hat S_n\}
		\ \ \text{ with } \ \
		\hat S_i := S_i - b^i  \wt V.
\]
	For every $\hat X=d \iota_{(r)}|_x (X)\in V^\bot_{(r,x)}\subset T_{(r,x)}(\R\times \cal N)$ 
	with $X\in T_x\cal N$ the 
	 $\hat{S}_i$  satisfy
		\[
		\tnab _{\hat X} \hat S_i\  =\ \bigl( \alpha^i (\hat X) - db^i(\hat X)\bigr) \wt V \ =\ 
	\left(	 \alpha^i_{(r)} (X) - db^i_{(r)}(X)\right) \wt V
		 \ = \ 0,\] 
	 which shows that $\hat\SS$ is involutive and horizontal.
	 Finally, the $\nabla^h$-parallelity of the frame $(V,S_1,\ldots, S_n)$ on $(\wt{\cal N},h)$ follows from Proposition~\ref{prop2}.
	\eprf


\bbem
	The  horizontal and involutive screen distribution on the universal cover obtained by this result does not 
	necessarily descend to a  horizontal and involutive one on
	the base manifold. In fact, the next example exhibits a \textit{compact} pp-wave for which 
	\textit{no  involutive realization of the screen bundle $\Sigma$ exists} (see also
 \cite[Proposition 2.42]{laerz-diss}).
\ebem

\bbsp\label{bundleexample}
We will give an example of a compact pp-wave that does not admit an involutive \edrevised{\textit{and} horizontal} screen distribution.  Let $\cal N:=\mathbb{T}^{n+1}$ be the $(n+1)$-torus  and
 let $c\in H^2(\T^{n+1},\Z)$ a  non-zero cohomology class, $\omega \in c$ a closed two-form representing $c$ in the de Rham cohomology. Now let $\pi:\cal M\to \T^{n+1}$ be the circle-bundle over $\T^{n+1}$ with first Chern class being equal to $c$. Furthermore, let $A\in T^*\M\otimes \mathrm{i}\rr$ be the corresponding $\mathrm{S}^1$-connection with curvature $F:=dA=-2\pi \mathrm{i}\pi^*\omega$.
Now let $\del_0, \ldots, \del_n$ be the canonical frame and $\xi^0, \ldots , \xi^n$ be the canonical coframe on $\T^{n+1}=\mathrm{S}^1\times \ldots \times \mathrm{S}^1$. Denote by
\be \eta:=\pi^*\xi^0,&& \s^i:=\pi^*\xi^i,\ i=1, \ldots n
\ee
 their pull-backs to $\tem$ and by $U$, $S_i$, $i=1, \ldots , n$ the $A$-horizontal lifts of $\del_0$ and $\del_i$, $i=1, \ldots, n$ respectively. Hence, we have
\be
d\pi_p(U)=\del_0|_{\pi(p)},&&
d\pi_p(S_i)=\del_i|_{\pi(p)}
\ee
and 
\[ A(U)=A(S_i)=0,\ \s^i(S_j)=\delta^i_{~j},\ \s^i(U)=0,\ \eta(S_i)=0,\ \eta(U)=1.\]
Let $V$ be the
fundamental vector field of the $\mathrm{S}^1$-action on $\cal M$, i.e., with $A(V)=\mathrm i$. Since $U$ and the $S_i$'s are defined as horizontal lifts of the $\del_i$'s we have that $[U,S_i]$ and $[S_i,S_j]$ are vertical vector fields and moreover that
\belabel{Vhoriz} [V,U]=[V,S_i]=0.\end{equation}

Having $(n+2)$-linearly independent nowhere-vanishing one forms, and choosing a smooth function   $H\in C^\infty (\mathbb{T}^{n+1})$, enables us to define a Lorentzian metric $g$  on $\cal M$ by
\[ g= 2 \left(H\eta -\mathrm{i} A\right)\cdot \eta +\sumi (\s^i)^2.
\]
The Koszul formula, the verticality of 
$[U,S_i]$ and $[S_i,S_j]$ together with equation \eqref{Vhoriz} show that $V$ is a parallel vector field for $g$. 
 An obvious screen vector field is given by
\[Z:=U-HV,\]
with the metric dual given by
\[Z^\b=g(Z,.)= H\eta-\mathrm{i}A.\]
Moreover, we have $\nabla_VS_i=0$, and, again using the Koszul formula
\beqa
\nabla_{S_i}S_j&=&\frac{\mathrm{i}}{2}F(S_i,S_j) V\label{nabsisj}
\\
\nabla_{U}S_i&=&\left( S_i(H)- \mathrm{i}F(S_i,U)\right)V +\frac{\mathrm{i}}{2}\sumk  F(S_i,S_k) S_k,\label{nabusi}
\eeqa
where $F=dA$ is the curvature of $A$.  For the curvature $R$ of $\nabla$ this implies
\[
R(S_k,U,S_i,S_j)
=
\frac{\mathrm{i}}{2} S_k( F(S_i,S_j))
=
\frac{\mathrm{i}}{2} (\nabla_{S_k}F) (S_i,S_j).
\]
Hence, in order to make $g$ a pp-wave metric, we have to assume that \belabel{nabF}
\nabla F|_{V^\bot \otimes V^\bot \wedge V^\bot}=0.\end{equation}
Then, for   the curvature of $g$ we obtain
\[
R(S_i,U,S_j,U)=
\del_i\del_j(H)-\frac{\mathrm{i}}{2}\Big(
S_i\left( F(S_j,U)\right) +\sumk F(S_i,S_k)F(S_k,S_j)\Big).
\]
Turning to the  properties of the screen, $\nabla_VS_i=0$ 
 implies that  the screen distribution $\SS$ defined by $Z$ is  horizontal. In order to show that $\SS$ is not involutive, we obtain from equation \eqref{nabsisj} that
\[Z^\b([S_i,S_j])= \mathrm{i} F(S_i,S_j).
\]
Hence, the screen defined by $Z$ is involutive, if and only if 
\[F|_{V^\bot\wedge V^\bot}=0,\]
or equivalently $0=\eta\wedge F$, since  $V^\b=g(V,.)=\eta$.
As an 
 example with condition \eqref{nabF}, but with $\eta \wedge F\not=0$, 
 in $F=-2\pi \mathrm{i}\pi^*\omega$ we can choose $\w\in \Omega^2(\T^{n+1})$ as 
\belabel{omegadef}
\w = \tfrac{1}{2}\sum_{i,j=1}^n a_{ij}\xi^i\wedge \xi^j \neq 0
\end{equation}
with constants $a_{ij} = -a_{ji}$. Moreover, since the $\xi^i$'s are not exact,  we can choose these constants such that $[\w]\not= 0\in H^2(\T^{n+2},\Z)$.  For such an example, the screen defined by $Z$ is not involutive, since 
\[\mathrm{i}F= \sum_{i,j=1}^n a_{ij}\s^i\wedge \s^j\]
does not vanish on $V^\bot\wedge V^\bot$. Then, $\cal M = \mathrm{S}^1 \times \cal P$, where
$\pi : \cal P \to \mathbb{T}^n$ is the circle bundle with first Chern class $c = [\omega] \in H^2(\mathbb{T}^n, \Z)$.

Now assume that $\hat \SS=\mathrm{span} (\hat S_1, \ldots, \hat S_n)$ is any other screen distribution defined by smooth functions
$b_i$ on $\M$  via $\hat S_i=S_i-b_i V$. This screen is horizontal since
\[0=\nabla_V\hat S_i=-V(b_i) V,\]
and the functions $b_i$ descend to smooth functions on $\T^{n+1}$. Furthermore, if  $\hat \SS$ was  involutive, we would have that
\[
0
=
\mathrm{i}F(S_i,S_j)-(d b_j(S_i)-db_i(S_j)),
\]
on $\M$,
or equivalently, for the one-form $\beta =\sumi \varphi_i \xi^i$  on $\T^{n}$,
\[\w (\del_i,\del_j)=d\beta (\del_i,\del_j),\]
 where $\varphi_i(x) := b_i(1, y)$ for $y \in \pi^{-1}(x)$. Hence,
$\w = d\beta$
which contradicts $[\w]\not=0$.
\ebsp

 
\subsection{The universal cover of  pp-waves under completeness assumptions}
Now we will  use the results in the previous section to show that, under some completeness  conditions, the universal cover of a pp-waves is globally of standard form.

\btheo\label{covertheo}
Let $(\M,g)$ be a pp-wave with parallel null vector field $V$ satisfying the following completeness  assumptions:
\bnum[(i)]
\item\label{theo3ass1}
 The maximal
	geodesics along the leaves of the parallel distribution $V^\bot$  are defined on $\rr$, and
	\item there exists a complete screen vector field $Z$.
	\enum
	Then the universal cover $\tem$ of $\M$ is diffeomorphic to $\rr^{n+2}$. Moreover, the universal cover $(\tem,\tg)$ is globally isometric to a
	standard  pp-wave	\begin{equation}	\label{equ-MT2a}
			\left(  \R^{n+2}, \ \ g^H = 
			2dudv + 2H(u,x^1, \ldots, x^n)du^2 + \delta_{ij}dx^idx^j\right).
	\end{equation}
	Under this isometry, the lift $\wt V$ of the parallel vector field $V$ is mapped to $\del_v$.
	\etheo
	
	\bprf
	By a tilde we shall denote the lift of an object to the 
	universal cover $\tem$ of $\cal M$.
					First, let $Z$ be the complete screen vector field and $\SS$ the corresponding screen 
	distribution on $\cal M$. Since $Z$ is complete,  we can apply 
	Proposition~\ref{PropUnivSplit} to $Z$ and  $\eta:=g(V,.)$ and obtain that the universal 
	cover $\tem$ of $\cal M$  is diffeomorphic to $\R \times \widetilde{\cal N}$, where 
	$\widetilde{\cal N}$ is the universal cover of a leaf $\cal N$ of the distribution $V^\bot$ 
	of $\cal M$. Clearly, $\widetilde{\cal N}$ is also a leaf of the distribution 
	$\widetilde V^\bot $ on $\tem$.
	\edremoved{
	Again, as  $Z$ is complete we can apply Theorem~\ref{ThmHI1} and obtain a horizontal 
	and involutive realization $\widehat{\SS}$ of the screen bundle $\widetilde{\Sigma}$ on 
	the universal cover $\tem$ and a corresponding screen vector field 
	$\hat Z\in \Gamma(T\tem)$, together with a frame $\hat S_i$ of $\widehat{\SS}$ with \belabel{nabsi0}
	\nabla_X\hat S_i=0, \text{ for all $X\in \wt V^\perp$.}
	\end{equation}
	 Furthermore,
	consider  the Riemannian metric $\hat h$ on $\widetilde{\cal N}$ defined by $\widehat{\SS}$,
	\[
	\hat h(\widetilde V,\widetilde V)=1,\ \hat h|_{\hat\SS\times \hat\SS} 
		= g|_{\hat\SS\times \hat\SS}, \  \hat h(\widetilde V,.)|_{\hat\SS} = 0.
	\]
	Hence, by Proposition~\ref{prop2},  the  simply connected Riemannian manifold $(\wt{\cal N}, \hat h)$ is flat and  admits a 
	frame $(\wt V, \hat{S}_1, \ldots, \hat{S}_n)$ of $\nabla^{\hat h}$-parallel vector fields which are 
	orthonormal with respect to $\hat h$. Moreover, by the first completeness assumption and relation~\eqref{nabsi0}, they are complete. Using  a result by Palais 
	\cite[Theorem VIII, Chapter IV]{palais57} (see also \cite[Proposition 1.9]{tricerri-vanhecke83} for a proof), we conclude that
	the manifold $\wt{\cal N}$ has a unique  structure of an Abelian Lie group for which the frame and 
	its (in our case vanishing) Lie brackets define the Lie algebra, and therefore, being simply connected,
	${\wt{\cal N}}$ is diffeomorphic to $\rr^{n+1}$.}
	\edrevised{Using assumption \eqref{theo3ass1} and because $(\cal M, g)$ is a pp-wave we conclude that $\wt\nabla|_{\wt {\cal N}}$ is a complete and flat connection on
	the simply-connected manifold $\wt{\cal N}$. Hence the exponential map w.r.t.\ $\wt\nabla|_{\wt {\cal N}}$ is a diffeomorphism and hence $\wt {\cal N}$ diffeomorphic to $\R^{n + 1}$.}
	This proves the first part of the statement.	
	
	The proof of the second part, that the lifted metric $\tg$ is isometric to a standard pp-wave, is more involved and requires some auxiliary statements. It follows  ideas in~\cite{derdzinski-roter09}.
	We start with
\begin{lem} 
	\label{Lemma:Jacobifields on flat manifolds}
	Let $(\cal M, \nabla)$ be smooth manifold with a torsion free connection 
 $\nabla$ and let $\delta : I \lra \M$ be a curve in $\cal M$ with 
	$0 \in I \subset \R$. Then a	vector field $X \in \Gamma(\delta^*T \cal M)$ along 
	the curve $\delta$ is parallel along $\delta$ if and only if  the vector field 
	$Y \in \Gamma(\delta^*T \cal M)$ with $Y(t) := t \cdot X(t)$ satisfies $\frac{\nabla^2}{dt^2}Y(t)\equiv 0$.
\end{lem}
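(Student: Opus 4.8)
The plan is to reduce the statement to a one-line computation of $\tfrac{\nabla^2}{dt^2}Y$ followed by a short ODE uniqueness argument to handle the point $t=0$. Write $X' := \tfrac{\nabla}{dt}X$ and $X'' := \tfrac{\nabla^2}{dt^2}X$ for the covariant derivatives of $X$ along $\delta$. The Leibniz rule for the covariant derivative along a curve (which holds for any linear connection, so torsion-freeness is not even needed here) gives $\tfrac{\nabla}{dt}Y = \tfrac{\nabla}{dt}(tX) = X + tX'$ and then $\tfrac{\nabla^2}{dt^2}Y = X' + X' + tX'' = 2X' + tX''$.

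From this identity the forward implication is immediate: if $X$ is parallel then $X'\equiv 0$, hence $X''\equiv 0$, hence $\tfrac{\nabla^2}{dt^2}Y = 2X' + tX'' \equiv 0$.

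For the converse I would argue as follows. Assume $\tfrac{\nabla^2}{dt^2}Y\equiv 0$, i.e. $2X'(t) + tX''(t) = 0$ for all $t\in I$. Put $Z := X'$, so that $tZ'(t) + 2Z(t) \equiv 0$. The natural move is to multiply by $t$ and recognise a total covariant derivative: $\tfrac{\nabla}{dt}\bigl(t^2 Z\bigr) = 2tZ + t^2 Z' = t\bigl(2Z + tZ'\bigr) = 0$. Hence the vector field $t\mapsto t^2 Z(t)$ is parallel along $\delta$. Since it vanishes at $t=0$, and a parallel vector field along a curve is uniquely determined by its value at one point (uniqueness for the linear ODE defining parallel transport), we get $t^2 Z(t) = 0$ for all $t\in I$. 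Therefore $Z(t) = X'(t) = 0$ for $t\neq 0$, and by continuity also at $t=0$; so $X'\equiv 0$, i.e. $X$ is parallel along $\delta$.

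The only point requiring a little care is the behaviour at $t=0$, where one cannot simply divide the relation $2X' + tX'' = 0$ by $t$; the $t^2$-trick converting the relation into the parallelity of $t^2 Z$ circumvents this. Apart from that, the proof is the displayed computation plus the standard uniqueness statement for parallel transport.
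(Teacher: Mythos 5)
Your proof is correct, and the opening computation $\tfrac{\nabla^2}{dt^2}Y = 2\tfrac{\nabla}{dt}X + t\tfrac{\nabla^2}{dt^2}X$ is exactly the identity the paper also starts from (including the observation that the forward implication is then immediate). Where you diverge is in how you extract $\tfrac{\nabla}{dt}X\equiv 0$ from the relation $2Z+tZ'=0$ with $Z=\tfrac{\nabla}{dt}X$. The paper introduces a parallel frame $E_i$ along $\delta$, writes $X=\sum_i\xi^iE_i$, and reduces to the scalar ODE $2(\xi^i)'+t(\xi^i)''=0$, which it solves explicitly on $(0,\infty)\cap I$ and $(-\infty,0)\cap I$ to see that any nonzero solution of $2y+ty'=0$ behaves like $\mathrm{const}/t^2$ and hence cannot extend through $t=0$. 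You instead stay coordinate-free: multiplying by $t$ turns the relation into $\tfrac{\nabla}{dt}(t^2Z)=0$, so $t^2Z$ is parallel along $\delta$ and vanishes at $t=0$, whence it vanishes identically by uniqueness of parallel transport, and $Z\equiv 0$ follows on $I\setminus\{0\}$ and then at $0$ by continuity. Both arguments are sound and of comparable length; yours buys a slightly cleaner treatment of the singular point $t=0$ (no explicit integration, no case split into the two half-intervals) at the price of invoking the uniqueness statement for parallel transport, while the paper's frame computation is more elementary but has to argue about blow-up of the explicit solutions. Your side remark that torsion-freeness is never used is also accurate --- the Leibniz rule for covariant differentiation along a curve holds for any linear connection.
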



	\begin{proof} One direction of the proof is trivial, so let us assume that
		\begin{equation}
			\label{Equation:Proof Jacobifields on flat manifolds no. 1}
			\frac{\nabla^2}{dt^2}Y(t)\equiv 0.
		\end{equation}
		By the Leibniz rule  this implies 
		that
		\begin{equation}
			\label{Equation:Proof Jacobifields on flat manifolds no. 2}
			2 \cdot \frac{\nabla} {dt} X(t) + t \cdot \frac{\nabla^2} 
			{dt^2} X(t) = 0
		\end{equation}
		for $t \in I$. Now let 
		$E_i(t) := \mathcal P_{\delta (t)}^\nabla(e_i)$ with fixed \edrevised{$x_0=\delta(0) \in \cal M$ }
		and a basis $e_1, \ldots, e_n$ in $T_{x_0}\cal M$. Consequently, we can write
		\edrevised{
		\begin{equation}
			\label{Equation:Proof Jacobifields on flat manifolds no. 3}
			X(t) = \sum_{i = 1}^n \xi^i(t) \cdot E_i(t),
		\end{equation}}
		which implies that $X$ is parallel along $\delta$ if $(\xi^i)' \equiv 0$ on $I$ 
		for all $i = 1, \ldots, n$. If we write $X$ in the form 
		(\ref{Equation:Proof Jacobifields on flat manifolds no. 3}),
		formula (\ref{Equation:Proof Jacobifields on flat manifolds no. 2}) 
		implies that the coefficient functions $\xi^i \in C^\infty(I)$ of $X$ must satisfy 
		the ordinary differential 
		equation $2(\xi^i)'(t) + t \cdot (\xi^i)''(t) = 0$ with the  initial values given 
		by $X(0) \in T_{x_0}\cal M$. Each such equation only has the constant solution defined on $I$:
		A discussion of the solutions $y$ on $(0, \infty) \cap I$ or $(-\infty, 0) \cap I$ 
		of $2y(t) + ty'(t) = 0$ yields \edrevised{$|y(t)| = \frac{C^2}{t^2}$ for some constant $C \in \R$}. Therefore, $y \equiv 0$ is the only
		solution, defined on $0$ since otherwise it must be equal to 
		\edrevised{$\pm \frac{C^2}{t^2}$} on $(0, \infty) \cap I$ or $(-\infty, 0) \cap I$ - a contradiction. 
		Hence, if $\xi^i \in C^\infty(I)$ with $0 \in I$ solves $2(\xi^i)'(t) + t \cdot (\xi^i)''(t) = 0$, 
		then $y := (\xi^i)'$ is defined on $0 \in I$ and solves the second differential equation of order one.
		Consequently, $y = (\xi^i)'$ is identically zero.
	\end{proof}
	To prove the second part of  Theorem \ref{covertheo}, let $\wt Z$ and $\wt\SS$ denote the lifts to $\wt{\cal M}$.
	Let $\gamma : \R \lra \wt{\cal M}$ be the integral curve of the complete vector field $\wt Z$
	through $x_0 \in \wt{\cal M}$ and $S_1, \ldots, S_n \in \Gamma(\wt\SS)$ such that $\wt g(S_i,S_j) = \delta_{ij}$ and 
	\edrevised{$\tnab S_i = \delta_{ij}\alpha^j \otimes \wt V$}, see Proposition~\ref{PropTrivHol}.
	We define the smooth map $\Phi : \R \times \R \times \R^n \lra \wt{\cal M}$ by
	\begin{equation}
		\label{Isom}
		\Phi(u,v,x^1,\ldots, x^n) := \exp_{\gamma(u)} (v \cdot \wt V(\gamma(u)) + \bigl\langle x, \vec{S}(u)\bigr\rangle ),
	\end{equation}
	where $\exp$ is the exponential map of $\tg$ and where we define $\bigl\langle x, \vec{S}(u)\bigr\rangle := \sumk x^k  S_k(\gamma(u))$. Then we  show
	\blem
	The smooth map $\Phi$  in \eqref{Isom} is well defined and a diffeomorphism.
	\elem
\bprf
By the first completeness assumption, the exponential	$\exp_p : \wt V^\bot_p = T_p\wt{\cal N} \lra \wt{\cal N}$ 
	is defined on the whole tangent space for each leaf $\wt{\cal N}$ through $p \in \wt{\cal M}$ 	
	and moreover, it is a diffeomorphism, since	$(\wt{\cal N}, \tnab |_{\wt{\cal N}})$ is a 
	complete, flat and simply connected manifold.
	Hence, in order to prove that $\Phi$ is injective, 
	it suffices to show that
	$\Phi(u_1,v_1,x) \neq \Phi(u_2,v_2,y)$ 
	for all $v_1,v_2 \in \R$ and $x,y \in \R^n$ whenever
	 $u_1 \neq u_2$.
But for  $u_1 \neq u_2$ we have
	$$
		\text{$\gamma(u_1)$ and $\gamma(u_2)$ are contained in two 
		disjoint leaves $\wt{\cal N}_1$ and $\wt{\cal N}_2$,}
	$$
	respectively. To see this, recall that --- as we have seen in the proof of 
	Proposition~\ref{PropUnivSplit} --- it holds $\wt \eta = df$ for some 
	$f \in C^\infty(\wt{\cal M})$ and $\wt\eta := \wt g(\wt V, \cdot)$ such that 
	$f(\gamma(u)) = u + f(x^0)$. In this situation,	each leaf is given as a level set of 
	$f$ and $\wt{\cal N}_1 = f^{-1}(u_1 + f(x^0))$, $\wt{\cal N}_2 = f^{-1}(u_2 + f(x^0))$ 
	which implies that $\gamma(u_1)$ and $\gamma(u_2)$ cannot lie within the same leaf. 
	But then 
	$$
		\exp_{\gamma(u_1)}(\wt V_{\gamma(u_1)}^\bot) = \wt{\cal N}_1 \ \ 
		\text{ and } \ \ \exp_{\gamma(u_2)}(\wt V_{\gamma(u_2)}^\bot) = \wt{\cal N}_2
	$$
	and since $\wt{\cal N}_1 \cap \wt{\cal N}_2 = \emptyset$, this yields 
	$\Phi(u_1,v_1,x) \neq \Phi(u_2,v_2,y)$ for all $v_1,v_2 \in \R$ and $x,y \in \R^n$.
	
	For proving the surjectivity of $\Phi$, let $p \in \wt{\cal M}$ be arbitrary and $\wt{\cal N}_p$ be the 
	leaf through~$p$. Then $f|_{\wt{\cal N}_p} \equiv c$ for some $c \in \R$.
	The point $(c_0,v,x)$ with $c_0 := c - f(x^0)$ and 
	$\exp_{\gamma(c_0)} (v  \wt V(\gamma(c_0)) + \bigl\langle x, \vec{S}(c_0)\bigr\rangle ) = p$ 
	is then a preimage of $p$.
	\eprf
	
Now  	we will show that the pull-back of $\tg$ by $\Phi$ is of the form $g^H$ as in \eqref{equ-MT2a}.
	For $k = 1,\ldots,n$, let
	\begin{eqnarray*}
		\cal V(u,v,x) & := & d\Phi_{(u,v,x)}(\partial_v), \\
		\cal X_k(u,v,x) & := & d\Phi_{(u,v,x)}(\partial_k), \\
		\cal Z(u,v,x) & := & d\Phi_{(u,v,x)}(\partial_u)
	\end{eqnarray*}
	denote the push-forward vector fields. Then, since the leaves $\wt{\cal N}$ of $\wt V^\bot$ are totally geodesic, we have that 
	$\cal V(u,v,x)\in {\wt V}^\bot_{\Phi(u,v,x)}$ and $\cal X_k(u,v,x) \in {\wt V}^\bot_{\Phi(u,v,x)}$ for $k=1, \ldots , n$.
	Furthermore, along the integral curve $\gamma$ of $\wt Z$, we have
	$\cal V (u,0,0)=\wt V(\gamma(u))$ and $\cal X_k(u,0,0)=S_k (\gamma(u))$. 
	Moreover we can show
	
	\blem\label{transportlemma}
	For each $(u,v,x)\in \rr^{n+2}$, consider the geodesic 
\[
	\rr\ni t\mapsto \delta(t):=\Phi(u,tv,tx)=
	\exp_{\gamma(u)}(t  ( v  \wt V(\gamma(u)) + \bigl\langle x, \vec{S}(u)\bigr\rangle)).\]
The vector fields $t\mapsto \cal V(u,tv,tx)$ and $t\mapsto\cal X_k(u,tv,tx)$ are parallel transported along $\delta$.
	
	\elem
\bprf
For each $(u,v,x)\in \rr^{n+2}$ consider the geodesic variation $F:\rr\times (-\ve,\ve)\to \tem$,
\[F(t,s)
=
\Phi(u,t(v+s),tx)= 
 \exp_{\gamma(u)}\left(  t \big( ( v+s)  \wt V(\gamma(u)) +  \bigl\langle x, \vec{S}(u)\bigr\rangle\big)\right)\]
 of the geodesic
$ \delta(t):=F(t,0)$.
The variation vector field along $\delta$ is given as 
\[
t\mapsto 
	\frac{\partial F}{\partial s}(t,0)=
dF|_{(t,0)}\left(\frac{\partial}{\partial s}\right)
=
d\Phi_{(u,tv,tx)}( t\del_v)
=
t\ \cal V(u,tv,tx).
\]
Thus, as  the variation vector field of a  variation of $\delta(t)$ by geodesics,  $Y(t):= t \cal V(u,tv,tx)$ is a Jacobi vector field along $\delta$. Hence,
since $\delta'(t)\in \wt V_{\delta(t)}^\bot $ as well as 
$Y(t)\in \wt V_{\delta(t)}^\bot $, we have 
\[
\frac{\nabla^2}{dt^2}Y(t)
=
\wt R (\delta'(t), Y(t))\delta'(t)
=0,
\] 
by the curvature properties of a pp-wave. We can apply Lemma~\ref{Lemma:Jacobifields on flat manifolds} and obtain that $t\mapsto \cal V(u,tv,tx)$ is parallel transported along the geodesic $\delta$.
 The same argument, using the geodesic variation 
 \[F_k(t,s):=
 \Phi(u,tv, tx +s \e_k),
 \]
 shows that the $\cal X_k$ are parallel transported along $\delta$.
\eprf
Recall that for $t=0$ we know that  $\cal V(u,0,0)=\wt V(\gamma(u))$ and $\cal X_k(u,0,0)=S_k(\gamma(u))$. On the one hand,  since $\wt V$ is parallel, in particular along $\delta$, this implies that $\cal V(u,tv,tx)=\wt V(\delta(t))$ and hence $\wt V=\cal V$ everywhere on $\tem$. On the other hand, it implies that
\be
\tg_{\phi(u,v,x)} (\cal V(u,v,x),\cal V(u,v,x))\ =\ \tg_{\gamma(u)} (\wt V(\gamma(u)),\wt V(\gamma(u)))&=&0,
\\
\tg_{\phi(u,v,x)} (\cal V(u,v,x),\cal X_k(u,v,x))\ =\ \tg_{\gamma(u)} (\wt V(\gamma(u)),S_k(\gamma(u)))&=&0,
\\
\tg_{\phi(u,v,x)} (\cal X_i(u,v,x),\cal X_j(u,v,x))\ =\ \tg_{\gamma(u)} ( S_i(\gamma(u)),S_j(\gamma(u)))&=&\delta_{ij},
\ee
and thus	$$
		\Phi^*\wt{g}(\partial_v,\partial_v) = 0, \ \Phi^*\wt{g}(\partial_v, \partial_k) 
		= 0 \text{ and } \Phi^*\wt{g}(\partial_i, \partial_j) = \delta_{ij}.
	$$
	It remains to show that $\Phi^*\wt{g}(\partial_k,\partial_u) = 0$ and 
	$\Phi^*\wt{g}(\partial_v,\partial_u) = 1$. For the second equation consider
	for fixed $v \in \R$ and $x \in \R^n$ the variation
	$$
		\nu(u,s) := \Phi(u, sv, sx)
	$$
	and let $\nu_u(u,s) := \frac{\partial \nu}{\partial u}(u,s)$ and 
	$\nu_s(u,s) := \frac{\partial \nu}{\partial s}(u,s)$. Observe that $\nu_s \in {\wt V}^\bot$
	and $\cal Z(u,sv,sx) = \nu_u(u,s)$. Consequently, by  Schwarz' lemma and the parallelity of $\wt V^\bot$, 
	\begin{equation}\label{schwarz}
		\frac{\nabla}{ds} \nu_u(u,s) = \frac{\nabla}{du} \nu_s(u,s) \in {\wt V}^{\bot}.
	\end{equation}
This implies
	$$
		\frac{d}{ds}{\wt g}_{\nu(u,s)}(\nu_u(u,s), \wt V(\nu(u,s))) \equiv 0,
	$$
i.e.,  $s \mapsto {\wt g}(\nu_u(u,s), \wt V(\nu(u,s)))$ is constant and thus 
	equals its value in $s = 0$, which is
	$$
		{\wt g}_{\nu(u,0)}(\dot\gamma(u), \wt V(\gamma(u))) 
		=
		{\wt g}_{\gamma(u)}(\wt Z(\gamma(u)), \wt V(\gamma(u))) 
		\equiv 1,
	$$
since $\nu(u,0)=\gamma(u)$, $\nu_u(u,0)=\dot \gamma(u)$ and since $\gamma$ is an integral curve of $\wt Z$. This proves $\Phi^*\wt{g}(\partial_v,\partial_u) = 1$.
	
		To see $\Phi^*\wt{g}(\partial_k,\partial_u) = 0$, consider the identity
	\begin{equation}
		\label{curv-var}
		R^{\wt g}(\nu_u, \nu_s)\nu_s = \frac{\nabla}{du}\frac{\nabla}{ds}\nu_s 
		- \frac{\nabla}{ds}\frac{\nabla}{du}\nu_s.
	\end{equation}
	Since $s\mapsto \nu(u,s)$ is a geodesic for every $u \in \R$, it holds $\frac{\nabla}{ds}\nu_s = 0$. 
	Taking into account that $\nu_s \in {\wt V}^{\bot}$ we have by the definition of a pp-wave, see also Proposition \ref{pp-prop}\eqref{screen-flat2}, that $R^{\wt g}(\nu_u, \nu_s)\nu_s \in \R \cdot \wt V$ and hence \eqref{schwarz} and 
	(\ref{curv-var}) yield
	$$
		\frac{\nabla}{ds}\frac{\nabla}{ds}\nu_u(u,s) = \varphi(\nu(u,s)) \cdot \wt V(\nu(u,s))
	$$
	for some function $\varphi \in C^\infty(\wt{\cal M})$. We conclude that
	\[
	\frac{d}{ds}{\wt g}_{\nu(u,s)}(\tfrac{\nabla}{ds}\nu_u(u,s), \cal X_k(u,sv,sx))
	=
{\wt g}_{\nu(u,s)}(\tfrac{\nabla}{ds}\nu_u(u,s),
	\tfrac{\nabla \cal X_k}{ds}(u,sv,sx))
	=0,
\]
	because of Lemma~\ref{transportlemma}. Hence,
	$$
		s \mapsto {\wt g}_{\nu(u,s)}(\tfrac{\nabla}{ds}\nu_u(u,s), \cal X_k(u,sv,sx))
	$$
	is constant and equals its value in $s = 0$. But for $s=0$ we have
	$$
		{\wt g}_{\nu(u,0)}(\tfrac{\nabla}{ds}\nu_u(u,0), \cal X_k(u,0,0)) = 0,
	$$
	since $\cal X_k(u,0,0) = S_k(\gamma(u))$, and
	\be
		\frac{\nabla}{ds}\nu_u(u,0) \ 
		=\  \frac{\nabla}{du}\nu_s(u,0)
		&=&
		\frac{\nabla}{du}\left( \frac{d}{ds}\left( \exp_{\gamma(u)}(s\big(v\wt V (\gamma(u))
		 +  \bigl\langle x, \vec{S}(\gamma(u))\bigr\rangle\big)\right)|_{s=0}\right)
		 \\
		 &
		=
		& \frac{\nabla}{du}\left(v \cdot \wt V(\gamma(u)) + \sumk x^kS_k(\gamma(u))\right) 		\\
		&=& \sum_{k,l=1}\delta_{kl} x^k\alpha^l(\dot\gamma(u)) \wt V(\gamma(u)),
	\ee 
	by  the Schwarz lemma, the parallelity of $\wt V$ and the property of $S_k$. Note that we use here that 
	$ \frac{\nabla}{du}( x^kS_k)= x^k\frac{\nabla}{du} S_k$ since the $x^k$ are constant along the curve $\gamma(u)$.
	Hence,
\[{\wt g}_{\nu(u,s)}(\tfrac{\nabla}{ds}\nu_u(u,s), \cal X_k(u,sv,sx))\equiv 0.\]
	Finally, this and  Lemma~\ref{transportlemma} imply that
	\[
		\frac{d}{ds}{\wt g}_{\nu(u,s)}(\nu_u(u,s), \cal X_k(u,sv,sx))
		= 
		{\wt g}_{\nu(u,s)}(\tfrac{\nabla}{ds}\nu_u(u,s), \cal X_k(u,sv,sx))
	=0.\]
	Hence, also $s \mapsto {\wt g}_{\nu(u,s)}(\nu_u(u,s), \cal X_k(u,sv,sx))$ is constant and 
	as $\nu_u(u,0) = \dot\gamma(u)$ we obtain at $s = 0$:
	$$	
		{\wt g}_{\nu(u,0)}(\dot\gamma(u), S_k(\gamma(u)))= 		{\wt g}_{\gamma(t)}(\wt Z(\gamma(t)), S_k(\gamma(u))) = 0.
	$$
	Thus, the only non constant term in the metric $\Phi^*\tg$ on $\rr^{n+2}$ is the function $H \in C^\infty(\R \times \R^n)$  defined by
	$$
		2H := (\Phi^*\wt g)(\partial_u,\partial_u).
	$$
	This finishes the proof of the second statement of  Theorem~\ref{covertheo}.
\eprf
 

\bbem
Note that, at this stage we do not make a claim about the geodesic completeness of pp-waves satisfying the assumptions of Theorem~\ref{covertheo}. This will depend on the function $H$. We will give a sufficient condition in Lemma~\ref{completelemma} in the next section, which we will then use to establish completeness for compact pp-waves.
\ebem

\section{Completeness of compact pp-waves}
\label{compactsection}

\subsection{Proof of Theorem~\ref{MainTheo2}}
\label{thmAsection}
Theorem ~\ref{MainTheo2} will follow from Theorem~\ref{covertheo}.
Since $\cal M$ is assumed to be compact, every screen vector field is complete. Hence, we  have to verify that a compact pp-wave satisfies the other assumption of Theorem~\ref{covertheo}:

\begin{theo}
	\label{MainTheo1}
	Let $(\cal M, g)$ be a compact pp-wave with parallel null vector field 
	$V $. Then the maximal  geodesics  along the leaves of the parallel distribution $V^\bot$  are defined on $\rr$.
\end{theo}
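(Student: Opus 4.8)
The plan is to prove this using only that $V$ is parallel and null and that $\cal M$ is compact; the screen‑flat (pp‑wave) hypothesis is not actually needed for \emph{this} statement. By ``geodesics along the leaves of $V^\bot$'' I mean the $\nabla$‑geodesics $\delta$ with $\dot\delta(0)\in V^\bot$ (equivalently, geodesics of the connections induced on the leaves, which are totally geodesic). Let $\delta\colon(\alpha,\beta)\to\cal M$ be such a maximal geodesic and suppose $\beta<\infty$; the case $\alpha>-\infty$ is symmetric. Since $V$ is parallel and $\delta$ is a geodesic, $\frac{d}{dt}g(V,\dot\delta)=g(\nabla_{\dot\delta}V,\dot\delta)+g(V,\nabla_{\dot\delta}\dot\delta)=0$, so $g(V,\dot\delta)\equiv g(V(\delta(0)),\dot\delta(0))=0$ and $\dot\delta(t)\in V^\bot$ for all $t$; in particular $\delta$ stays inside a single leaf of $V^\bot$. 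Likewise $g(\dot\delta,\dot\delta)$ is constant along $\delta$, say $\equiv c_0$, and $c_0\ge 0$ because $g$ is positive semidefinite on $V^\bot$.

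Next I would fix any screen vector field $Z$ on $\cal M$, the screen distribution $\SS=V^\bot\cap Z^\bot$, and the Riemannian metric $h=h^\SS$ from \eqref{h}. Decomposing $\dot\delta=aV+W$ with $a:=g(Z,\dot\delta)$ and $W\in\SS$, the relations \eqref{h} give $h(\dot\delta,\dot\delta)=a^2+g(W,W)=a^2+c_0$, so in particular $|a|\le|\dot\delta|_h$. For the evolution of $a$, the parallelity of $g$ and the geodesic equation yield $\frac{d}{dt}a=g(\nabla_{\dot\delta}Z,\dot\delta)$; and since $g(Z,V)\equiv1$ and $g(Z,Z)\equiv0$, one gets $g(\nabla_XZ,V)=g(\nabla_XZ,Z)=0$ for every $X$, i.e.\ $\nabla_XZ\in\SS$ (so that $|\nabla_XZ|_g$ makes sense, $g$ being positive definite on $\SS$). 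Applying the Cauchy--Schwarz inequality for the positive semidefinite form $g$ to $\nabla_{\dot\delta}Z$ and $\dot\delta$ (both in $V^\bot$), together with a pointwise bound $|\nabla_XZ|_g\le L\,|X|_h$ that holds for some constant $L$ because $\cal M$ is compact, gives
\[
\Bigl|\tfrac{d}{dt}\,h(\dot\delta,\dot\delta)\Bigr|=\bigl|2a\,g(\nabla_{\dot\delta}Z,\dot\delta)\bigr|\le 2\,|a|\,|\nabla_{\dot\delta}Z|_g\,\sqrt{c_0}\le 2L\sqrt{c_0}\;|\dot\delta|_h^{\,2}=2L\sqrt{c_0}\;h(\dot\delta,\dot\delta).
\]

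From this differential inequality, Gr\"onwall's lemma gives $h(\dot\delta(t),\dot\delta(t))\le h(\dot\delta(0),\dot\delta(0))\,e^{2L\sqrt{c_0}\,t}$ for $t\in[0,\beta)$, so the $h$‑speed of $\delta$ stays bounded on the finite interval $[0,\beta)$. Hence the curve $t\mapsto\dot\delta(t)$ lies in a set $\{v\in T\cal M:|v|_h\le\mathrm{const}\}$, which is compact since $\cal M$ is; being an integral curve of the geodesic spray of $\nabla$, it therefore extends past $\beta$, contradicting maximality. Thus $\beta=\infty$ and, symmetrically, $\alpha=-\infty$. The only point I would flag as needing care, rather than a genuine obstacle, is that the leaves of $V^\bot$ are in general neither compact nor closed, so one cannot directly invoke completeness of an induced metric on a leaf; the argument instead rests on the two conserved quantities $g(V,\dot\delta)$ and $g(\dot\delta,\dot\delta)$ of the ambient geodesic and on a Gr\"onwall estimate whose constants are controlled purely by compactness of $\cal M$.
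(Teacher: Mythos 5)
Your proof is correct, and it takes a genuinely different route from the paper's. The paper passes to the universal cover, uses the triviality of the lifted screen holonomy there (this is where the pp-wave hypothesis enters) to produce a global frame $S_1,\dots,S_n$ of the lifted screen with $\nabla S_i=\alpha^i\otimes \wt V$, shows these are complete geodesic vector fields for an auxiliary Riemannian metric, corrects them to a $\nabla^{\hat h}$-parallel frame $\hat S_i=S_i-b_i\wt V$ whose completeness needs a separate flow computation (Lemma~\ref{silemma}), and concludes that each leaf carries a complete flat connection agreeing with the restriction of $\wt\nabla$. You instead stay on the compact manifold itself: the two conserved quantities $g(V,\dot\delta)\equiv 0$ and $g(\dot\delta,\dot\delta)\equiv c_0\ge 0$, the observation that $\nabla_XZ\in\SS$, Cauchy--Schwarz for the semidefinite form $g|_{V^\bot}$, and a compactness bound on $\nabla Z$ close up into the differential inequality $\bigl|\frac{d}{dt}h(\dot\delta,\dot\delta)\bigr|\le 2L\sqrt{c_0}\,h(\dot\delta,\dot\delta)$, after which Gr\"onwall and the escape lemma for the geodesic spray finish the argument; each step checks out (in particular $h(\dot\delta,\dot\delta)=a^2+c_0$ and $|a|\le|\dot\delta|_h$ are exactly right, and the case $c_0=0$ degenerates harmlessly). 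Your approach is shorter, avoids any appeal to completeness of induced metrics on the non-closed leaves, and --- as you note --- never uses the screen-flatness condition, so it actually proves the statement for every compact Lorentzian manifold with a parallel null vector field (a Brinkmann space in the paper's terminology), not just for pp-waves. What the paper's longer route buys is the structural output that gets reused afterwards: the parallel orthonormal frame $(\wt V,\hat S_1,\dots,\hat S_n)$ and the flatness and completeness of $(\wt{\cal N},\hat h)$ feed directly into the splitting of the universal cover and the construction of the global isometry with a standard pp-wave in Theorem~\ref{covertheo}, none of which your estimate provides on its own.
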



	\bprf 
	 Again,  by a tilde we shall denote the lift of an object to the 
	universal cover $\tem$ of $\cal M$.
				First, let $Z$ be a screen vector field and $\SS$ the corresponding screen 
	distribution on $\cal M$. As $\cal M$ is compact, $Z$ is complete and we can apply 
	Proposition~\ref{PropUnivSplit} to $Z$ and  $\eta:=g(V,.)$ and obtain that the universal 
	cover $\tem$ of $\cal M$  is diffeomorphic to $\R \times \widetilde{\cal N}$, where 
	$\widetilde{\cal N}$ is the universal cover of a leaf $\cal N$ of the distribution $V^\bot$ 
	of $\cal M$. Clearly, $\widetilde{\cal N}$ is also a leaf of the distribution 
	$\widetilde V^\bot $ on $\tem$. Since $(\M,g)$ is a pp-wave, the lift $\wt{\SS}$ of the screen distribution $\SS$ comes with a global frame field $S_i\in \Gamma (\wt{\SS})$, $i=1, \ldots, n$, on $\tem$ satisfying the relations (see Proposition~\ref{PropTrivHol}) 
	\begin{equation}\label{nabsi}\tnab _XS_i=\alpha_i(X)\wt V.\end{equation} 
	Note that the $S_i$ are not necessarily lifts of global vector fields on the compact $\M$, however we will show that they are complete.
	
To this end, consider the  Riemannian metric $h$ on $\cal M$ defined by the original screen distribution $\SS$ on $\cal M$ via
	\[h(V,V)=h(Z,Z)=1,\ h(V,Z)=h(V,X)=h(Z,X)=0,\ h(X,Y)=g(X,Y),\]
	for all $X,Y\in \Gamma(\SS)$. As a Riemannian metric on a compact manifold $\cal M$ it is 
	geodesically complete, and so is its restriction to the leaves $\cal N$ of $V^\bot$, see for 
	example \cite[Exercise 10.4.28]{10}.
%
Therefore, the lifted Riemannian metric $\wt h$  on $\wt{\cal N}$ is geodesically complete. 
	Now one computes that the  vector fields $S_1, \ldots, S_n$ on $\wt{\cal N}$, which are 
	$\wt h$-orthonormal, 
	 span the 
	lifted screen $\wt{\SS}$ and satisfy equation~\eqref{nabsi}, are in fact geodesic 
	vector fields for $(\wt{\cal N}, \wt h)$. Indeed, from the Koszul formula we get
	\be 
	0\ =\ \wt g(\tnab_{S_i}S_i,X)
	&=&S_i(\wt g(S_i,X)) +\wt g([X,S_i],S_i)
	\\
	&= &S_i(\wt h(S_i,X)) +\wt h([X,S_i],S_i)\ =\ \wt h(\nabla^{\wt h}_{S_i}S_i,X),
	\ee
	for all $X\in \Gamma(T\wt{\cal N})$. Here the replacement of $\wt g$ by $\wt h$ is justified since 
	$$
		\wt g(S_i,.)|_{T\wt{\cal N}} =\wt h(S_i,.)|_{T\wt{\cal N}}.
	$$
	With $(\wt{\cal N}, \wt h)$ 
	being geodesically complete and $S_i$ being geodesic vector fields, this yields the 
	conclusion that the $S_i$ are complete vector fields.
		Now we need
	\blem \label{screenlemma}
	Let $(\M,g)$ be a pp-wave with a complete parallel null vector field $V$ and assume that there is a complete screen vector field $Z$. Then 
		there is a 
		horizontal 
	and involutive realization $\widehat{\SS}$ of the screen bundle $\widetilde{\Sigma}$ on the universal cover $\tem$, and the leaves $\wt{\cal N}$ of $\wt V^\bot$ are diffeomorphic to $\rr\times \hat{\cal S}$,
	where $\hat{\cal S}$ is a leaf of the  distribution $\widehat{\SS}$. In particular, $\tem$ is diffeomorphic to $\rr^2\times \hat{\cal S}$.
	\elem
	\bprf  
Since  $Z$ is complete we can apply Theorem~\ref{ThmHI1} and obtain a horizontal 
	and involutive realization $\widehat{\SS}$ of the screen bundle $\widetilde{\Sigma}$ on 
	the universal cover $\tem$ and a corresponding screen vector field 
	$\hat Z\in \Gamma(T\tem)$. Furthermore, 
	consider  the Riemannian metric $\hat h$ on $\widetilde{\cal N}$ defined by
	\[
	\hat h(\widetilde V,\widetilde V)=1,\ \hat h|_{\hat\SS\times \hat\SS} 
		= g|_{\hat\SS\times \hat\SS}, \  \hat h(\widetilde V,.)|_{\hat\SS} = 0, 
	\]
	and  $\hat\eta\in \Gamma(T^*\widetilde{\cal N})$ defined by $\hat\eta(X)=\hat h(\widetilde V,X)$. 
	Then $\hat \eta$ is closed, since $\hat\SS$ is integrable and hence
	\[d\hat\eta (X,Y)= \hat h([X,Y],\widetilde V) =0.\]
By  assumption, $V$ and hence its lift  $\widetilde{V}$ are complete vector fields. Thus we can again apply 
	Proposition~\ref{PropUnivSplit}, this time to $\widetilde{\cal N}$, $\hat\eta$ and $\widetilde V$, 
	to conclude the proof.\footnote{One can also argue in the following way:
	From Proposition~\ref{prop2} we know that $\wt V$ is a parallel vector field on  the Riemannian 
	manifold $(\widetilde{\cal N},\hat h) $ but also that $\wt V$ as a lift of the complete vector 
	field $V$ is complete. Hence, as $\wt {\cal N}$ is simply connected, the flow of $\wt V$
	separates a line $\rr$ from $\wt{ \cal N}$ with orthogonal complement being the leaves 
	$\hat {\cal S}$ of the integrable distribution $\hat \SS$, again proving the lemma.}
	\eprf

Let 	$\widehat{\SS}$ 	be a horizontal 
	and involutive screen distribution, obtained from 
Theorem~\ref{ThmHI1}, with corresponding Riemannian  metric $\hat{h}$ on $\wt{\cal N}$.
	Consider the $\hat h$-orthonormal vector fields 
	$\hat S_1, \ldots, \hat S_n \in \Gamma(\hat{\SS})$ with
	$\tnab \hat{S_i}|_{\widetilde{\cal N}} = 0$ and given by 
	$$
		\hat S_i=S_i-b_i \wt V
	$$ for some real functions $b_i \in C^\infty(\wt{\cal M})$.	According to Proposition~\ref{prop2}, $\wt V$ together with  the $\hat S_i$'s form a frame of $T\wt{\cal N}$ consisting  of $\hat h$-parallel vector 
	fields. Using Lemma~\ref{screenlemma}, for these we prove 
	\blem\label{silemma}
		The vector fields $\hat S_i$ are complete.
	\elem
	\bprf We saw that the  vector fields $S_i$ are complete, i.e., we obtain their  flows as
	\[
		\phi^i: \rr \times \wt{\cal N} \to \wt{\cal N}.
	\]
	Recall that, by Proposition~\ref{PropUnivSplit}, the leaf $\wt{\cal N}$ is diffeomorphic	to $\R \times \hat{\cal S}$ via
	$$
		\Psi: p \in \ten \longmapsto (\varphi(p), \psi_{-\varphi(p)}(p)) \in \rr \times \hat{\cal S},		
	$$
	were $\{\psi_t\}$ is the flow of $\wt V$ and $\varphi \in C^\infty(\ten)$, such that 
	$\hat h(\wt V, \cdot)|_{{\wt V}^\bot} = d\varphi$ (see the proof of Proposition~\ref{PropUnivSplit}).
	Under this diffeomorphism, the flows \{$\phi_t^i$\} are given as
	$$
		\wt{\phi}^i_t(p) := \Psi(\phi^i_t(p)) = (\nu^i_t(p), \hat{\phi}^i_t(p)),
	$$
	with
	\begin{eqnarray*}
		\nu^i_t(p) & := &\varphi(\phi^i_t(p)), \\
		\hat{\phi}^i_t(p)& := &\psi_{-\nu^i_t(p)}(\phi^i_t(p)),	
	\end{eqnarray*}
	both defined for all $t\in \rr$.
	We do now claim that $\{\hat{\phi}^i_t\}$ is the flow of $\hat S_i$. Indeed,
	on the one hand we have that
	\begin{equation}
		\label{pr-MainTheo1-eq1}
		\frac{d}{dt}\wt{\phi}^i_t(p) 
		= (\tfrac{d}{dt}\nu^i_t(p), \tfrac{d}{dt}\hat{\phi}^i_t(p))
		= (\tfrac{d}{dt}\nu^i_t(p), 0) + (0, \tfrac{d}{dt}\hat{\phi}^i_t(p)).
	\end{equation}
On the other hand we compute, using Lemma~\ref{screenlemma}, the chain rule and the linearity of the differential
\begin{equation}
		\tfrac{d}{dt}\wt{\phi}^i_t(p) 
		= 
			d\Psi_{\phi^i_t(p)}(S_i(\phi^i_t(p)))  
			= 
			d\Psi_{\phi^i_t(p)}(\hat S_i(\phi^i_t(p))) + b_i(\phi^i_t(p))d\Psi_{\phi^i_t(p)}(\wt V(\phi^i_t(p))) \label{flows1} .
	\end{equation}
 Temporarily 	denoting by 
 $\{\xi_t^i\}$  the flow of $\hat S_i$,
	for the first term  we get
	\begin{eqnarray*}
	d\Psi_{\phi^i_t(p)}(\hat S_i(\phi^i_t(p)))
	\ =\ 
	 \left.\tfrac{d}{d\tau}\Psi(\xi_\tau^i(\phi^i_t(p)))\right|_{\tau = 0} 
	 &=&
	 \left.\tfrac{d}{d\tau}\left( 
	 \nu^i_t(p), \psi_{-\nu_t(p)}\circ \xi^i_\tau(\phi^i_t(p))\right)\right|_{\tau = 0} 	 
	 	 \\
	 &=&
	 \left.\tfrac{d}{d\tau}\left( 
	 \nu^i_t(p), \xi^i_\tau  \circ \psi_{-\nu_t(p)} (\phi^i_t(p)) \right)\right|_{\tau = 0} 	 
	 	 \\
	 &=&
	 \left.\tfrac{d}{d\tau}\left( 
	 \nu^i_t(p), \xi^i_\tau  \left( \hat{\phi}^i_t(p) \right)\right)\right|_{\tau = 0} 
	 \\
	 & = & (0, \hat S_i(\hat{\phi}^i_t))\in \rr\oplus T\hat{\cal S},	 
	 \end{eqnarray*}
	in which we were allowed to commute the flows $\xi^i_\tau$ and $\psi^i_s$  because of  $[\wt V, \hat S_i] = 0$. 
	For the second term in \eqref{flows1} we recall that 
	\[ \tfrac{d}{d\tau}\varphi(\psi_\tau(\phi^i_t(p)))|_{\tau = 0}=d\varphi_{\phi^i_t(p)}(\wt V|_{\phi^i_t(p)})=
	\hat h(\wt V,\wt V )|_{\phi^i_t(p)}\equiv1,\]
	which implies $\varphi(\psi_\tau(\phi^i_t(p))=\tau+c$ for a constant $c$. Hence, we get
	\be
	d\Psi_{\phi^i_t(p)}(\wt V(\phi^i_t(p)))			
		&		 =  & \left.\tfrac{d}{d\tau}\Psi(\psi_\tau(\phi^i_t(p)))\right|_{\tau = 0} 
		\\
	&=&
	\left.\tfrac{d}{d\tau}\left( \vf(\psi_\tau(\phi^i_t(p))), \psi_{-\vf(\psi_\tau(\phi^i_t(p)))}(\psi_\tau(\phi^i_t(p)))\right)\right|_{\tau=0}\\
		&		= &
	\left.\tfrac{d}{d\tau}\left( \tau+c, 	
		 \psi_{-c}\circ \psi_{-\tau}(\psi_\tau(\phi^i_t(p)))\right)\right|_{\tau=0}\\
	&=&	   (1,0)\in \rr\oplus T\hat{\cal S} 
	\ee
	Both computations show that \eqref{flows1} becomes
	\[
			\tfrac{d}{dt}\wt{\phi}^i_t(p)
			= \left(  b_i(\phi^i_t(p)) , \hat S_i(\hat{\phi}^i_t)\right)\in \rr\oplus T\hat{\cal S},
			\]
	which, together with \eqref{pr-MainTheo1-eq1}, shows that
	$\tfrac{d}{dt}\hat{\phi}^i_t(p) = \hat S_i(\hat{\phi}^i_t(p))$. Hence, $\hat{\phi}^i_t$ is the flow of $\hat S_i$ which is defined on $\rr$. This proves the lemma.
	\eprf

Thus, applying Lemma~\ref{silemma} we \edremoved{can proceed  as in the proof of Theorem~\ref{covertheo} and} obtain that,  on the simply connected Riemannian manifold $(\wt{\cal N}, \hat h)$, we have
 a frame  $(\wt V, \hat{S}_1, \ldots, \hat{S}_n)$ of complete vector fields which are $\nabla^{\hat h}$-parallel and 
$\hat h$-orthonormal. \edrevised{Then it is obvious that $\nabla^{\hat h}$ is a complete connection}, implying that  $(\wt{\cal N}, \hat h)$ is geodesically complete.  
	On $\wt {\cal N}$	the frame $(\wt V, \hat{S}_1, \ldots, \hat{S}_n)$ is parallel for both,
	the Levi-Civita connections $\wt{\nabla}^g $  of $\wt{g}$ and $\nabla^{\hat h}$ of $\hat h$, the 
	connections  are equal, and whence, the leaf $\wt{\cal N}$ of $\wt{V}^\bot $ is geodesically complete 
	for the  metric $\wt g$. Hence, the leaves $\cal N$ of $V^\bot$ on $\cal M$ are 
	geodesically complete for  $g$.
\eprf
\edrevised{
\bbem
Note that one of the key steps in the proof is Lemma \ref{silemma}. It ensures  that the completeness of the vector fields $S_i$ on $\wt{\M}$, which are complete as lifts of vector fields on the compact manifold $\M$, implies the completeness  of the vector fields $\hat S_i =S_i-b_i \wt V$, {\em despite the fact the functions $b_i$  do not come from functions on the compact manifold $\M$} and, for example, may be unbounded. However the splitting results enable us to relate the flow of the $\hat S_i$'s to the flow of the $S_i$'s in a way that their completeness, and as a consequence, the completeness of $\wt g$ and thus of $g$ follow.
\ebem
}



\subsection{Proof of Theorem~\ref{MainTheo3} and Corollary \ref{ricfolg}}
\label{thmBsection}
Finally, the proof of  Theorem~\ref{MainTheo3} is based on Theorem~\ref{MainTheo2} and a version of results by Candela et al.~\cite{candela-flores-sanchez03} adapted to our situation\footnote{In fact, during the preparation of the paper we learned that   Lemma \ref{completelemma} follows from  stronger results by Candela et al.~\cite[Theorems 1 and 2]{candela-romero-sanchez13}. However, for the sake of being self-contained we include a  proof of the lemma. For further results and comments see \cite{candela-romero-sanchez13plane,sanchez13}}:
\blem\label{completelemma}
The pp-wave metric on $\rr^{n+2}$ in standard form
\[g^H=2du (dv +H(u,x^1, \ldots , x^n) du) +\delta_{ij}dx^idx^j\] is geodesically complete if all second $x^i$-derivatives of $H$ are bounded, $\left| \frac{\del^2H}{\del x^i\del x^j}\right|\le c$ for a positive constant $c$ and $1\le i,j\le n$.
\elem

%
%

\bprf
	By Theorem~\ref{Lem-Sanchez}, $g^H$ is  complete if every maximal solution 
	$\gamma : s \mapsto \gamma(s) \in \R^n$ of
	\begin{equation}
		\label{eqMainTheo3}
		\ddot \gamma(s) = F(s,\gamma):= \grad_{\R^n} H(s,\gamma(s))
	\end{equation}
is defined on 
	the whole real line. 
	Now, recall the following fact, see for example \cite[Theorem 2.17]{teschl12}:
	{\em Let $F:\rr \times \rr^{2n} \to \rr^n$ be globally Lipschitz on every set of the 
	form $I \times \rr^{2n}$, where $I$ is a closed interval, then, for every initial 
	value $(t_0,x_0,x_1)\in \rr\times \rr^{2n}$ 
	there is a solution $x:\rr \to \rr^n$ of the initial value problem 
	$\ddot{x}=F(t,x,\dot x)$ with $x(t_0)=x_0$ and $\dot x(t_0)=x_1$}. 
	We thus have to show, that the function $F : [a,b] \times \R^{2n} \lra \R^n$  with $F(s,x,y)=F(s,x)$ defined in 
	(\ref{eqMainTheo3}) is Lipschitz for arbitrary $a,b \in \R$. Clearly, by the mean value theorem for functions from $\rr^n$ to $\rr^n$,  if every
	partial derivative of $F$ is bounded, then $F$ is Lipschitz. But every partial derivative in the second argument of 
	$F = (F_1, \ldots, F_n)$ is given by
	$$
		\frac{\partial F_i}{\partial x_j}(t,x) = \frac{\partial}{\partial x_j}\left(\frac{\partial H}{\partial x_i}\right)(t,x),
	$$
	and thus bounded by assumption.
	We conclude that $F$ must be Lipschitz on every set $[a,b] \times \R^n$ 
	which guarantees that the maximal solutions $\gamma$ of (\ref{eqMainTheo3}) are
	defined on $\R$.
	\eprf
	\bbem\label{EKremark}
In regard to the Ehlers-Kundt problem mentioned in the introduction,	 Lemma~\ref{completelemma} provides us with many examples of pp-waves that are not plane waves. Again, these examples cannot be Ricci-flat, since harmonic functions do not have 
bounded second derivatives unless 
they are quadratic and thus a pp-wave.
	\ebem
	
	The proof of Theorem~\ref{MainTheo3} will follow from
	\blem 
	\label{hess-lemma}
	Let $(\M,g)$ be a compact pp-wave and let $g^H=2du (dv +H du) +\delta_{ij}dx^idx^j$ be the metric 
on	the  universal cover $\rr^{n+2}$ of $\M$ that is globally isometric to the 
	 lift of~$g$. Then all second covariant derivatives  of $H$ in $x^i$-directions are bounded, 
	\[0 \le \del_i\del_j H\le c,\ \text{ for all $i,j=1, \ldots n$.}\]
	\elem

	\bprf
	Let $\phi : (\R^{n + 2}, g^H) \lra (\cal M, g)$ denote the isometric universal covering map from Theorem~\ref{MainTheo2}.
	Let $Z \in \Gamma(T\cal M)$ be an arbitrarily chosen screen
	vector field and $\wt Z \in \Gamma(T\wt{\cal M})$ its pullback to $\wt{\cal M}$. 
	Note that we have particularly shown in Theorem~\ref{MainTheo2} that  $g(d\phi(\partial_u), V) = 1$, and hence we have that  
	\belabel{dphiz}d\phi(\partial_u) = Z + \sumi b_iS_i + c  V\end{equation}for smooth functions 
	$b_i,c \in C^\infty(\wt{\cal M})$ and $S_i$ a basis of the screen distribution corresponding to $Z$.
Now we define 	a symmetric $(0,2)$-tensor field on $\cal M$ as
	$$
	Q (X,Y) := R (X,Z,Z,Y).
	$$
Since $\cal M$ is compact, the  function  $\overline{g}(Q,Q)$, where $\overline{g}$ denotes the metric induced by $g$ on $(0,2)$-tensor fields,  is bounded, i.e., $-C^2<\overline{g}(Q,Q) < C^2$ for some constant $C \in \R^+$. Computing $\overline{g}(Q,Q)$ in a frame $V,Z,E_1,\ldots , E_n$ with $E_i$  an orthonormal frame of the screen defined by $Z$, the obvious equation $Q(V,.)=0$ gives us
\[
\overline{g}(Q,Q)
\ =\ \sum_{i,j=1}^n Q(E_i,E_j)^2 \\
 \ =\ 
\sum_{i,j=1}^n R (E_i,Z,Z,E_j)^2,
\]
so we have in fact that $0\le \overline{g}(Q,Q) < C^2$.
	
	Pulling back $Q$ to the universal cover $(\rr^{n+2}, g^H)$ by the  isometric covering map $\phi$,  using \eqref{dphiz}, \eqref{curv} and \eqref{screen-flat}, we get that $\phi^*Q(\del_v,.)=0$ and 
	\be
	\phi^*Q(\del_i,\del_j)_x &
	=& R _{\phi(x)} (d\phi_x(\del_i),Z,Z, d\phi_x(\del_j)
	\\
	&	=& 
	R _{\phi(x)} (d\phi_x(\del_i),d\phi_x(\del_u),d\phi_x(\del_u), d\phi_x(\del_j))
	\\
		&=& \phi^*R_{x} (\del_i,\del_u,\del_u,\del_j)
		\\&=&
		R^{g^H}_x(\del_i,\del_u,\del_u,\del_j)
		\\
		&=&
		-\del_i\del_jH(x).
	\ee
	Hence, by using a frame $(\del_v,\del_u-H\del_v, \del_i)$ on $(\rr^{n+2},g^H)$ to compute $\overline{g^H}(\phi^*Q,\phi^*Q)$,  at each point in  $\rr^{n+2}$ we have
	$$
		C^2 > \overline{g}(Q,Q) = \overline{g^H} (\phi^*Q, \phi^*Q )= 
		\sumij 		\phi^*Q(\del_i,\del_j)^2
		=
		\sumij (\del_i\del_jH)^2,
	$$
	which shows that all $\del_i\del_j H$ are bounded. \eprf
	\bprf[Proof of Theorem~\ref{MainTheo3}]
	Let $(\M,g)$ be a compact pp-wave. 
	Because of Theorem \ref{MainTheo2}, the universal cover is isometric to a standard pp-wave $(\rr^{n+2},g^H)$, and by Lemma \ref{hess-lemma}, all $\del_i\del_j H$ are bounded. Then, 
by	 Lemma~\ref{completelemma}, $(\rr^{n+2},g^H)$ is complete, and thus 
  $(\M,g)$ itself is complete.
 \eprf
 
 Lemma \ref{hess-lemma} also provides us with a proof of Corollary~\ref{ricfolg}:
 \bprf[Proof of Corollary~\ref{ricfolg}]
Let $(\M,g)$ be a compact pp-wave and let $(\rr^{n+2}, g^H)$ be the standard pp-wave that is globally isometric to the universal cover of $(\M,g)$.  Lemma \ref{hess-lemma} tells us that the $\del_i\del_jH$ are bounded. If  $g$ is Ricci-flat, so is $g^H$, and thus $H$ is harmonic with respect to the $x^i$-directions, i.e., $\sumi\del_i^2(H)=0$. But this implies that also  $\del_i\del_jH$ is harmonic in the same sense, and thus, by the maximum principle for harmonic functions, independent of the $x^i$ components. Hence,  
\[
H=\sumij a_{ij}(u)x^ix^j+b_ix^i+c \] with $a_{ij}$, $b_i$ and $c$ functions of $u$ only, which implies that $(\M,g)$ is a plane wave. 
 \eprf
\subsection{Plane waves} Finally, we apply Theorems \ref{MainTheo2} and \ref{MainTheo3} to plane waves as defined  in Definition \ref{planedef}.


\bfolg
	An $(n+2)$-dimensional compact plane-wave is geodesically complete and its universal 
	cover is isometric to $\rr^{n+2}$	with the metric $g^H$ defined in Theorem 
	\ref{MainTheo2}, where $H(u, x) = \sumkl a_{kl}(u)x^kx^l$ for some
	$a_{kl}=a_{lk} \in C^\infty(\R)$.
\efolg


\bprf
Since plane waves are pp-waves, Theorem~\ref{MainTheo3} implies that compact plane waves are complete. Furthermore, by
 Theorem~\ref{MainTheo2}, we have for the universal covering that
 $$
 	R^{g^H}(\partial_i,\partial_u,\partial_u,\partial_j) 
 	=\hess \,H(\partial_i, \partial_j) 
 	= \partial_i(\partial_j(H))
 $$
The additional plane wave condition $\nabla  R = V^\flat \otimes Q$ implies  for the universal cover that 
 \[0=(\nabla_{\partial_k} R^{g^H})(\partial_i,\partial_u,\partial_u,\partial_j) = 
 -\del_k\del_i\del_j(H),
 \]
 since $\nabla_{\del_k}\del_u\in \del_v^\bot$. This implies that
 $$
 	H(u, x) = \sumkl a_{kl}(u)x^kx^l + \sumk b_k(u) x^k + c(u).
 $$ 
Getting rid of the linear and constant terms in this expression is achieved by a  coordinate transformation of the 
form \[
\wt v=  v -\dot{\beta}_i(u) x^i+\gamma (u)
,\ \ 
\wt x^i =x^i+\beta_i(u),\ \ 
\wt u=u
\]
where $\beta$ and $\gamma$ are obtained by integrating
\be
\ddot{\beta_i}(u)&=& - b_i(u),\\
\dot\gamma(u)&=& c(u)-\tfrac{1}{2}\sumi \dot \beta_i(u)^2 ,
\ee
with initial conditions $ \beta_i(0)=0$ and $\gamma(0)=0$.
\eprf
In view of Corollary \ref{ricfolg}, note that plane waves in standard form are Ricci flat if and only if the matrix $a_{ij}$ is trace-free.


\bbem
	If we weaken the assumption made within this paper that the null vector field $V$ is parallel, to $V$ being 
recurrent, i.e. with $\nabla V = \varphi \otimes V$, then a compact Lorentzian manifold with the
	curvature condition of a pp-wave but with such a
	recurrent vector field\footnote{In \cite{leistner05c} we called these Lorentzian manifolds  
	\textit{pr-waves} for {\em plane fronted with recurrent rays}.} 
	is not necessarily  complete. This means \textit{Theorem~\ref{MainTheo3} cannot
	be generalized to compact Lorentzian manifolds with the curvature conditions of a pp-wave but with 
	recurrent null vector field}. Even if $\varphi(X) = 0$ for all $X \in V^\bot$ such that the 1-form $g(V, \cdot)$ is still
	closed, the result is false in general, as the following example shows.
\ebem

\bbsp\label{incompex}
	Consider $\wt{\cal M} := \R^{n + 2}$ endowed with the metric
	$$
		\wt g_{(u,v,x_1, \ldots, x_n)} := 2dudv - 2\left(
		\sin(v) - \sumi a_i (\cos(x_i) - 1)\right)
		du^2 + \sumi dx_i^2,
	$$
	for constants $a_i$. Being $2\pi$-periodic, the metric $\wt g$ 
	descends to a metric $g$ on the torus $\mathbb T^{n + 2} := \R^{n + 2}/2\pi\Z^{n + 2}$. The
	inextensible (transversal) geodesic 
	$$
	{\wt \gamma}(t) := (\ln(t), 0, \ldots, 0)
	$$
	then defines an 
	inextensible geodesic $\gamma : (0, \infty) \lra \mathbb T^{n + 2}$ on the compact Lorentz manifold
	$(\mathbb T^{n + 2}, g)$ by $\gamma(t) := \pi(\wt{\gamma}(t))$, with $\pi : \R^{n + 2} \lra \mathbb T^{n + 2}$
	denoting the canonical projection. For $a_i=0$ this is a version of the Clifton-Pohl torus. See also results by S\'{a}nchez \cite{sanchez97} on (incomplete) Lorentzian $2$-tori.
\ebsp

However, we do not know, if at least the geodesics along the leaves of $V^\bot$ are all complete.
For the case $\varphi(X) = 0$ for all $X \in V^\bot$ our proofs seem to be adaptable to this
situation since, in this case, $V^\flat$ is still closed.

\ednew{
\subsection{Compact indecomposable Lorentzian locally symmetric spaces} 
\label{symsec}
Before we consider {\em locally} symmetric spaces and give the     proof of Corollary~\ref{symcol}, we recall some facts about indecomposable Lorentzian (globally) symmetric spaces. First 
recall from the introduction that a Lorentzian manifold is {\em indecomposable} if it is not locally isometric to a semi-Riemannian product.  
Now there is the following dichotomy: The transvection group of a simply connected,  indecomposable Lorentzian symmetric space $(\M,g)$ is either semisimple or solvable \cite[Theorems~2 and~3]{cahen-wallach70}. If the transvection group is semisimple, 
 $(\M,g)$ is either of dimension $2$, in which case it has constant sectional curvature, or has irreducible isotropy \cite[Proposition~1]{cahen-wallach70}. 
Isotropy irreducible semi-Riemannian symmetric spaces were classified by Berger \cite[Tableau II]{berger57}. This classification applied to Lorentzian signature implies   that  $(\M,g)$ is either a de~Sitter space or the universal cover of  an anti-{de~Sitter} space and thus has constant sectional curvature (see \cite[Corollary 1.5]{olmos-discala01} for a more direct proof). In contradistinction, when the transvection group is solvable, $(\M,g)$ is a {\em Cahen-Wallach space} \cite[Theorem 5]{cahen-wallach70}, i.e., $\M=\rr^{n+2}$ and 
 \[g=g^S=2du (dv+ S_{ij}x^ix^j \, du)+\delta_{ij} dx^i\,dx^j,
 \] with a {\em constant} symmetric $n\times n$-matrix $S=\left(S_{ij}\right)_{i,j=1}^n$ that is different from the zero matrix. Having a global parallel null vector field $\partial_v$ and satisfying the curvature condition \eqref{screen-flat}, a Cahen-Wallach space is a special case of a pp-wave. 
 
\bprf[Proof of Corollary~\ref{symcol}]
 Let $(\M,g)$ be a compact  {\em locally symmetric} Lorentz\-ian manifold, which is indecomposable.  Then, as a locally symmetric space, $(\M,g)$ is locally isometric to a certain symmetric space $(\M_0,g_0)$ (see for example \cite[p.~252]{ko-no2}).  Since $(\M,g)$ is indecomposable, there is a point without a neighbourhood on which $g$ is a product metric. Hence, $(\M_0,g_0)$ must be  one of  the 
  aforementioned indecomposable  symmetric spaces, i.e., $(\M,g)$ is locally isometric to either a space of constant curvature or to a Cahen-Wallach space. In the first case $(\M,g)$ is geodesically complete by Klingler's result \cite{klingler96}.
 Hence we may assume that $(\M,g)$  is locally isometric to a Cahen-Wallach space with local coordinates $(v,x^i,u)$ and  defined by a symmetric matrix $S$. This implies that $(\M,g)$ admits a null line bundle $\cal V$ that is invariant under parallel transport. At a point $p\in \M$ the fibre of $\cal V$ is spanned by the value of the parallel coordinate vector field $\del_v$ at $p$. Moreover, in a basis $(\del_v,\del_i,\del_u-S_{ij}x^ix^j \del_v)|_p$,  an element of the holonomy group $\Hol_p(\M,g)$ at $p$ is of the form 
 \[h=\begin{pmatrix} a& *&* \\0&A&* \\0&0&a^{-1}\end{pmatrix},\]
 with $a\in \rr^*$ and $A\in \mathbf{O}(n)$ (see \cite{baum-laerz-leistner12}).
 Since $(\M,g)$ is locally symmetric, i.e. $\nabla R=0$,  the curvature tensor $R$ is invariant under the action of the full holonomy group,
$h\cdot R=R$,
 for all $h\in \Hol_p(\M,g)$. Hence,  by formula \eqref{curv},  for each pair of indices $i,j=1, \ldots n$  we get that
 \[
 S_{ij}\ \del_v
 =
 R(\del_i,\del_u)\del_j
 =
(h\cdot R)(\del_i,\del_u)\del_j
=
h(R(h^{-1}\del_i,h^{-1} \del_u)h^{-1}\del_j)
=
a^2 A_{i}^{~k}S_{kl} A_{j}^{~l}\ \del_v,
 \]
at $p$. This is nothing else than $S=a^2A^\top S A$. With $S\not=0$, it implies that $a^2=1$. This shows that the holonomy group acts on the fibre of $\cal V$ by $\pm 1$. Hence,   the time-orientable cover of $(\M,g)$, which is still compact,  admits global parallel null vector field and therefore satisfies our definition of a pp-wave. Then Theorem \ref{MainTheo3} applies, yielding that the time-orientable cover and therefore $(\M,g)$ itself are  geodesically complete.
\eprf
Note that Corollary \ref{symcol} implies that a compact locally symmetric Lorentzian manifold that is a {\em global} product of an indecomposable Lorentzian manifold with a Riemannian manifold is complete. However, 
 our proof does not immediately generalise to arbitrary decomposable (in the above local sense) compact locally symmetric Lorentzian manifolds. We believe that one can obtain a proof in the general case  by using the local de Rham and Wu decomposition theorems, but this requires to overcome some technical difficulties and we postpone this to future work.
 }



\bibliography{geobib}
\end{document}